
\documentclass[11pt]{amsart}
\usepackage{geometry}                
\geometry{letterpaper}                   
\usepackage[parfill]{parskip}    
\usepackage{graphicx}
\usepackage{amssymb}
\usepackage{epstopdf}
\DeclareGraphicsRule{.tif}{png}{.png}{`convert #1 `dirname #1`/`basename #1 .tif`.png}

\newtheorem{theorem}{Theorem}[section]

\newtheorem{lemma}[theorem]{Lemma}
\newtheorem{proposition}{Proposition}

\theoremstyle{definition}
\newtheorem{definition}[theorem]{Definition}
\newtheorem{remark}{Remark}

\DeclareGraphicsRule{.tif}{png}{.png}{`convert #1 `dirname #1`/`basename #1 .tif`.png}

\newcommand{\R}{\mathbb{R}}
\newcommand{\N}{\mathbb{N}}

\title[Global Dynamics]
      {Global dynamics of planar discrete type-K competitive systems}

\author[Zhanyuan Hou]{}

\subjclass{Primary: 37B35; Secondary: 37B25, 37C70, 37D10.}
 \keywords{discrete type-K competitive systems; type-K retrotone maps; weakly type-K retrotone maps; global attractors, global dynamics}

 \email{z.hou@londonmet.ac.uk}


\begin{document}

\maketitle

\centerline{\scshape Zhanyuan Hou }
\medskip
{\footnotesize
 \centerline{School of Computing and Digital Media, London Metropolitan University,}
   \centerline{166-220 Holloway Road, London N7~8DB, UK}
} 

\medskip


\begin{abstract}
For a continuously differentiable Kolmogorov map defined from the nonnegative orthant to itself, a type-K competitive system is defined. Under the assumptions that the system is dissipative and the origin is a repeller, the global dynamics of such systems is investigated. A (weakly) type-K retrotone map is defined on a bounded set, which is backward monotone in some order. Under certain conditions, the dynamics of a type-K competitive system is the dynamics of a type-K retrotone map. Under these conditions, there exists a compact invariant set \textbf{A} that is the global attractor of the system on the nonnegative orthant exluding the origin. Some basic properties of \textbf{A} are established and remaining problems are listed for further investigation for general $N$-dimensional systems. These problems are completely solved for planar type-K competitive systems: every forward orbit is eventually monotone and converges to a fixed point; the global attractor \textbf{A} consists of two monotone curves each of which is a one-dimensional compact invariant manifold. A concrete example is provided to demonstrate the results for planar systems. 
\end{abstract}

\textbf{Note.} The final version of this paper will be published in Journal of Difference Equations and Applications soon.


\section{Introduction}
Consider the discrete dynamical system
\begin{equation}\label{e1}
	x(n) = T^n(x), \quad x\in C,\quad n\in \N,
\end{equation}
where $C = \R^N_+ = [0, +\infty)^N$, $\N = \{0, 1, 2, \ldots\}$ and the map $T: C \to C$ has the form
\begin{equation}\label{e2}
	T_i(x) = x_if_i(x), \quad i\in I_N = \{1, 2, \ldots, N\}
\end{equation}
and $f\in C^1(C, C)$ with $f_i(x)>0$ for all $x\in C$ and $i\in I_N$. System (\ref{e1}) is often viewed in literature as a mathematical model for the population dynamics of a community of $N$ species, where each $x_i(n)$ represents the population size or density at time $n$ (at the end of $n$th time period), and the function $f_i(x)$ denotes the per capita growth rate, of the $i$th species. 

If increase of the $j$th population reduces the per capita growth rate of the $i$th species, so $\frac{\partial f_i}{\partial x_j} \leq 0$, for all $i, j\in I_N$, then (\ref{e1}) models the population dynamics of a community of competitive species and (\ref{e1}) is called a {\it competitive system}. If increase of the $j$th population promotes the per capita growth rate of the $i$th species, so $\frac{\partial f_i}{\partial x_j} \geq 0$, for all $i, j\in I_N$, then (\ref{e1}) models the population dynamics of a community of cooperative species and (\ref{e1}) can be called a {\it cooperative system}. (Note that these concepts are different from cooperative and competitive maps appeared in literature, e.g. in \cite{Smi}, \cite{WaJi1} or \cite{Kul}.) Now suppose the $N$-species community is divided into two groups, $(x_1, \ldots, x_k)$ in group 1 and $(x_{k+1}, \ldots, x_N)$ in group 2, species in each group form a competitive system but species between the two groups are cooperative. This situation can be described by the signs of $\frac{\partial f_i}{\partial x_j}$ in the Jacobian matrix $Df(x) = (\frac{\partial f_i}{\partial x_j})$, 
\begin{equation}\label{e3}
	Df(x) = \left(\begin{array}{ll} A_1 & B_1 \\ B_2 & A_2 \end{array} \right),
\end{equation}
where $A_1$ is a $k\times k$ matrix, $A_2$ is $(N-k)\times(N-k)$ matrix, of nonpositive entries with each diagonal entry negative, and $B_1$ and $B_2$ are matrices with nonnegative entries. Then (\ref{e1}) models the population dynamics of $N$ species cooperating between two competitive groups. In this case, (\ref{e1}) is called a {\it type-K competitive system}. 

For cooperative system (\ref{e1}) $T$ is actually a monotone map. So the general theory of monotone dynamical systems (see \cite{Smi1}, \cite{HirSmi} and the references therein) can be applied to (\ref{e1}) for its global dynamics. For competitive systems, (\ref{e1}) and its various particular instances as models have attracted huge interests from researchers in the last few decades. One of the important and influential developments is the existence of a {\it carrying simplex} $\Sigma \subset C$ when $T$ restricted to a compact set $S\subset C$ is invertible and $T(S)\subset S$. The set $\Sigma$ is actually the global attractor of the system on $C\setminus\{\mathbf{0}\}$. For more descriptions of $\Sigma$ and the global dynamics in terms of $\Sigma$, see \cite{Hir1}, \cite{Hou1}, \cite{WaJi1}, \cite{WaJi2}, \cite {JiMiWa}, \cite{Her}, \cite{Hou2} and \cite{MieBai}. 

For type-K competitive differential systems, the author's recent work \cite{Hou3} based on the results of \cite{LiaJia} investigates the global attractor of the system restricted to $C\setminus\{\mathbf{0}\}$ and describes the global dynamics in terms of this global attractor, which can be viewed as an analogue to the carrying simplex of a competitive system. For discrete type-K competitive system (\ref{e1}), however, little is known about the global dynamics of the system within the author's knowledge. 

The aim of this paper is to start the investigation on the global dynamics of type-K competitive system (\ref{e1}). We shall further describe the problems for $N$-dimensional systems in section 2, and focus on a thorough exploration for planar type-K competitive systems (\ref{e1}) in section 3. The the rest of the paper is for the proof of some results and conclusion.

\section{Type-K retrotone maps and preliminaries} 

We first introduce some notation. Let $k\in I_N$ with $k<N$ and 
\[
K= \{p\in\R^N: \forall i\in I_k, p_i\geq 0; \forall j\in I_N\setminus I_k, p_j\leq 0\}
\] 
with $\dot{K}$ $(\dot{C})$ denoting the interior of $K$ $(C)$. For any $x, y\in \R^N$, we write $x\leq y$ or $y \geq x$ if $y-x\in C$; $x<y$ or $y>x$ if $x\leq y$ but $x\not= y$; $x\ll y$ or $y\gg x$ if $y-x \in \dot{C}$. Similarly, $x\leq_K y$ or $y \geq_K x$ if $y-x\in K$; $x<_K y$ or $y>_K x$ if $x\leq_K y$ but $x\not= y$; $x\ll_K y$ or $y\gg_K x$ if $y-x \in \dot{K}$. For any $x, y\in C$ with $x\leq y$ ($x\leq_K y$), let $[x, y] = \{z\in C: x\leq z\leq y\}$ ($[x, y]_K = \{z\in C: x\leq_K z\leq_K y\}$). When $[$ or $]$ is replaced by $($ or $)$ in the above notation, the corresponding $\leq$ ($\leq_K$) is replaced by $\ll$ ($\ll_K$). For any $i\in I_N$, let $\pi_i = \{x\in C: x_i = 0\}$, the $i$th coordinate plane in $C$. For any $J\subset I_N$ we define $C_J$ by
\[
C_J = \cap_{i\in I_N\setminus J}\pi_i = \{x\in C: \forall i\in I_N\setminus J, x_i=0\}.
\]
Then, with $H = I_k =\{1, \ldots, k\}$ and $V = I_N\setminus H$, $C$ can be viewed as  ``two-dimensional'' with ``horizontal positive half axis'' $C_H$ and ``vertical positive half axis'' $C_V$.

\begin{definition}
	The map $T$ given by (\ref{e2}) is said to be {\it type-K retrotone} on a subset $X\subset C$ if for any $x, y\in X$, $T(x) <_K T(y)$ implies $x <_K y$ and, for each $i\in H$ and $j\in V$, if $y_i\not= 0$ then $x_i<y_i$ and if $x_j\not= 0$ then $x_j>y_j$. 
\end{definition}

\begin{definition}
	The map $T$ given by (\ref{e2}) is said to be {\it weakly type-K retrotone} on a subset $X\subset C$ if for any $x, y\in X$, $T(x) <_K T(y)$ implies $x <_K y$ and, for each $i\in H$ and $j\in V$, if $T_i(x)<T_i(y)$ then $x_i<y_i$ and if $T_j(x)>T_j(y)$ then $x_j>y_j$. 
\end{definition}

For $x, y\in X$, let $I_1 = \{i\in I_N: \max\{x_i, y_i\}>0\}$ and $J_1= \{j\in I_N: T_j(x)\not= T_j(y)\}$. Then $J_1\subset I_1$. If $T$ is a type-K retrotone map on $X$, then $T(x) <_K T(y)$ implies $x <_K y$ and, restricted to $C_{I_1}$, $x\ll_K y$. But if $T$ is a weakly type-K retrotone map on $X$, then $T(x) <_K T(y)$ implies $x <_K y$ and, restricted to $C_{J_1}$, $x\ll_K y$. Since $J_1$ is a subset of $I_1$, a type-K retrotone map is obviously weakly type-K retrotone but not vice versa. Note that a (weakly) type-K retrotone map restricted to $X\cap C_V$ or $X\cap C_H$ is (weakly) retrotone (see \cite{Her}, \cite{Hou2} or \cite{MieBai} for definition).

In parallel to retotone maps related to competitive systems, we shall see that type-K retotone maps are related to type-K competitive systems. From now on we assume the following conditions for system (\ref{e1}):
\begin{itemize}
	\item[(A1)] (\ref{e1}) is a type-K competitive system.
	\item[(A2)] $f(\mathbf{0})\gg \mathbf{1}=(1, \ldots, 1)$ so that the origin $\mathbf{0}=(0, \ldots, 0)$ is a repeller.
	\item[(A3)] (\ref{e1}) is dissipative.
\end{itemize}
Then, from (A3) we see the existence of a bounded set $B\subset C$ such that $T^n(x)\in B$ for each $x\in C$ and large enough $n$. So it is reasonable to assume the existence of $r\in \dot{C}$ such that $T$ maps $[\mathbf{0}, r]$ into itself: $T([\mathbf{0}, r]) \subset [\mathbf{0}, r]$, and $T^n(x)\in [\mathbf{0}, r]$ for each $x\in C$ and large enough $n$. Then we need only consider the dynamics of $T$ on $[\mathbf{0}, r]$. However, it is not trivial to check (A3) and to find such an $r$. The following proposition might be helpful. For any $x\in C$ and any nonempty $J\subset I_N$, let $x_J$ be the projection of $x$ onto $C_J$, i.e. $x_J\in C_J$ and $(x_J)_i = x_i$ for all $i\in J$. Then $x_H$ ($x_V$) is the projection of $x$ onto $C_H$ ($C_V$). Denote the $i$th unit vector by $\mathbf{e}_i$, its $i$th component is 1 and others are 0. 

\begin{proposition}\label{pro2.3'}
	Assume (A1) and the existence of continuous $u: \R_+\to C$ such that $\mathbf{0} \ll u(s)\ll u(t)$ for all $t>s\geq 0$ and $u_i(t)\to\infty$ as $t\to \infty$ for all $i\in I_N$. If $\forall t\geq 0, \forall i\in H, \forall j\in V$,
	\begin{equation}\label{E3}
		\max_{s\in[0, u_i(t)]}T_i(s\mathbf{e}_i+u(t)_V) <u_i(t),\quad \max_{s\in[0, u_j(t)]}T_j(s\mathbf{e}_j+u(t)_H) <u_j(t),
	\end{equation}
	then (\ref{e1}) is dissipative and for $r=u(0)$, $T([\mathbf{0}, r])\subset [\mathbf{0}, r)$ and $T^n(x)\in [\mathbf{0}, r]$ for each $x\in C$ and some $n\geq 0$.
\end{proposition}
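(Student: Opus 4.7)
The plan is first to prove the sharper inclusion $T([\mathbf{0}, u(t)]) \subset [\mathbf{0}, u(t))$ for every $t \geq 0$, which at $t=0$ gives the claim $T([\mathbf{0}, r]) \subset [\mathbf{0}, r)$, and then to use this nested family of strictly absorbing rectangles, together with a compactness argument, to show that every forward orbit eventually enters $[\mathbf{0}, r]$. Dissipativity then follows at once because $T([\mathbf{0}, r]) \subset [\mathbf{0}, r]$.

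For the inclusion, fix $t \geq 0$ and $x \in [\mathbf{0}, u(t)]$. For $i \in H$, assumption (A1) yields
\[
\frac{\partial T_i}{\partial x_j}(x) = x_i\frac{\partial f_i}{\partial x_j}(x) \leq 0 \text{ for } j\in H\setminus\{i\}, \qquad \frac{\partial T_i}{\partial x_j}(x) \geq 0 \text{ for } j\in V,
\]
so on $[\mathbf{0}, u(t)]$, with $x_i$ held fixed, $T_i$ is maximized by taking the remaining $H$-coordinates to $0$ and the $V$-coordinates to $u(t)_V$. Hence
\[
T_i(x) \leq T_i\bigl(x_i\mathbf{e}_i + u(t)_V\bigr) \leq \max_{s\in[0,u_i(t)]} T_i\bigl(s\mathbf{e}_i + u(t)_V\bigr) < u_i(t)
\]
by (\ref{E3}). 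The symmetric argument, with $H$ and $V$ interchanged, gives $T_j(x) < u_j(t)$ for every $j \in V$. Thus $T(x) \ll u(t)$, establishing $T([\mathbf{0}, u(t)]) \subset [\mathbf{0}, u(t))$.

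For absorption, fix $x \in C$; since $u_i(t) \to \infty$, some $T_0 \geq 0$ gives $x \leq u(T_0)$, and the inclusion above forces $T^n(x) \in [\mathbf{0}, u(T_0)]$ for all $n \geq 0$. Define $\sigma_n = \inf\{t\geq 0 : T^n(x) \leq u(t)\} \in [0, T_0]$; continuity and componentwise strict monotonicity of $u$ make this an attained infimum, so $T^n(x) \leq u(\sigma_n)$. If $\sigma_n > 0$, the first step gives $T^{n+1}(x) \ll u(\sigma_n)$, and continuity of $u$ supplies $\sigma' < \sigma_n$ with $T^{n+1}(x) \leq u(\sigma')$, so $\sigma_n$ is non-increasing with limit $\sigma^* \geq 0$. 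To exclude $\sigma^* > 0$, pass to a subsequential limit $T^{n_k}(x) \to y$ in the compact set $[\mathbf{0}, u(T_0)]$; then $y \leq u(\sigma^*)$, so by the first step $T(y) \ll u(\sigma^*)$, and choosing $\sigma'' < \sigma^*$ with $T(y) \leq u(\sigma'')$ and invoking continuity of $T$ yields $\sigma_{n_k+1} \leq \sigma'' < \sigma^*$ for large $k$, contradicting $\sigma_n \geq \sigma^*$. Hence $\sigma^* = 0$, and a final use of the strict inclusion $T([\mathbf{0}, r]) \subset [\mathbf{0}, r)$ together with continuity of $T$ at any accumulation point of $(T^n(x))$ in $[\mathbf{0}, r]$ produces an $n$ with $T^n(x) \in [\mathbf{0}, r]$.

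The main obstacle I anticipate is precisely this last bootstrap: $\sigma_n \downarrow 0$ alone does not place any iterate inside $[\mathbf{0}, r]$, because one only obtains $T^n(x) \leq u(\sigma_n)$ with $u(\sigma_n)$ strictly above $r$. The \emph{strict} inclusion established in the first step is what permits one extra iteration to close this gap, and the subsequential-limit argument is needed to convert $\sigma_n \downarrow \sigma^*$ into an actual contradiction rather than mere asymptotic accumulation.
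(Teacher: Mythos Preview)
Your proof is correct and follows the same route as the paper: establish $T([\mathbf{0}, u(t)]) \subset [\mathbf{0}, u(t))$ for all $t\ge 0$ from (A1) and (\ref{E3}), then deduce absorption into $[\mathbf{0}, r]=[\mathbf{0}, u(0)]$. The paper's own proof consists of a single sentence asserting this equivalence and that ``the conclusion follows,'' so your $\sigma_n$-argument and the final accumulation-point step simply fill in details the paper leaves to the reader.
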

\begin{proof}
	Note that condition (\ref{E3}) is equivalent to $T([\mathbf{0}, u(t)]) \subset [\mathbf{0}, u(t))$ for all $t\geq 0$. Then the conclusion follows.
\end{proof}

Note that the condition $T([\mathbf{0}, r]) \subset [\mathbf{0}, r]$, which means forward invariance of $[\mathbf{0}, r]$, ensures that the set
\begin{equation}\label{e4}
	\mathcal{A} = \cap_{n=0}^{\infty} T^n([\mathbf{0}, r])
\end{equation}
is nonempty compact invariant ($T(\mathcal{A})= \mathcal{A}$) with $\mathbf{0}\in \mathcal{A}$ and it is the fundamental global attractor of $T$ on $C$, i.e. it attracts the points of any compact set in C uniformly.

By (A2) there is a $q\in (\mathbf{0}, r)$ such that $f(x)\gg\mathbf{1}$ for all $x\in [\mathbf{0}, q]$, $T([\mathbf{0}, r]\setminus [\mathbf{0}, q)) \subset [\mathbf{0}, r]\setminus [\mathbf{0}, q)$ and $[\mathbf{0}, q] \subset T([\mathbf{0}, q])$. As $[\mathbf{0}, r]\setminus [\mathbf{0}, q)$ is a compact subset of $[\mathbf{0}, r]$ and forward invariant, with
\begin{equation}\label{e5}
	\Sigma = \cap_{n=0}^{\infty} T^n([\mathbf{0}, r]\setminus [\mathbf{0}, q)),
\end{equation}
we see that $\Sigma$ is nonempty compact invariant and globally attracts all compact sets in $C\setminus\{\mathbf{0}\}$.

We need to know the detailed composition and geometric features of $\Sigma$, dynamical properties on $\Sigma$, and description of the dynamical behaviour of (\ref{e1}) in terms of $\Sigma$. For this purpose, we establish a link between a type-K competitive system (\ref{e1}) and a type-K retrotone map $T$ defined by (\ref{e2}). 

Since $f$ on $C$ is a $C^1$ map, $T$ is also a $C^1$ map with Jacobian matrix
\begin{equation}\label{e6}
	DT(x) = \textup{diag}(f_1(x), \ldots, f_N(x))(I - M(x)),
\end{equation}
where $I$ is the identity matrix and 
\begin{equation}\label{e7}
	M(x)= (m_{ij}(x))= \left(-\frac{x_i}{f_i(x)}\frac{\partial f_i}{\partial x_j}(x)\right)_{N\times N}.
\end{equation}
Let $\mathcal{M}$ be the set of $N\times N$ matrices of real entries such that $A\in \mathcal{M}$ if and only if $-A$ has the decomposition as $Df$ in (\ref{e3}). Then $M(x)^n\in\mathcal{M}$ for all integers $n\geq 0$ under the assumption (A1). A set $S$ is called {\it type-K convex} if for any distinct $x, y\in S$ satisfying $x<_K y$, the line segment $\overline{xy} = \{tx+(1-t)y: 0\leq t\leq 1\}$ is contained in $S$.

\begin{proposition}\label{pro2.3}
	Assume that the map $T$ defined by (\ref{e2}) satisfies the following conditions: 
	\begin{itemize}
		\item[(a)] (\ref{e1}) is a type-K competitive system. 
		\item[(b)] $T([\mathbf{0}, r])$ is type-K convex.
		\item[(c)] The spectral radius of $M(x)$ satisfies $\rho(M(x)) <1$ for all $x\in [\mathbf{0}, r]$.
	\end{itemize}
	Then $T: [\mathbf{0}, r]\to T([\mathbf{0}, r])$ is a homeomorphism and a weakly type-K retrotone map. If, in addition, 
	\begin{equation}\label{E7}
		\forall i, j\in I_N, \forall x\in [\mathbf{0}, r], \frac{\partial f_i(x)}{\partial x_j} \not= 0,
	\end{equation}
	then $T$ is type-K retrotone.
\end{proposition}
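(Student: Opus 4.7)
The plan is to reduce everything to the sign structure of $DT(x)^{-1}$ under the coordinate conjugation $P=\mathrm{diag}(p_1,\ldots,p_N)$, where $p_i=1$ for $i\in H$ and $p_j=-1$ for $j\in V$. Since $\rho(M(x))<1$, the Neumann series $\sum_{n\ge 0}M(x)^n$ converges to $(I-M(x))^{-1}$, so by (\ref{e6}) the Jacobian $DT(x)$ is invertible with $DT(x)^{-1}=(I-M(x))^{-1}\mathrm{diag}(1/f_1(x),\ldots,1/f_N(x))$. Under assumption (a), the block sign pattern in (\ref{e3}) makes $PM(x)P$ entrywise nonnegative, and conjugation preserves the spectral radius, so $\rho(PM(x)P)<1$. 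Hence $(I-PM(x)P)^{-1}=\sum_{n\ge 0}(PM(x)P)^n$ is entrywise nonnegative with diagonal entries $\ge 1$, and therefore $DT(x)^{-1}=P(I-PM(x)P)^{-1}\mathrm{diag}(1/f(x))P$ carries $K$ into $K$, strictly preserving positivity in each $H$-slot and negativity in each $V$-slot wherever the Neumann-series row hits a nonzero entry of the input.

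For the main retrotonicity statement, take $x,y\in[\mathbf{0},r]$ with $T(x)<_K T(y)$. By type-K convexity of $T([\mathbf{0},r])$, the segment $\gamma(s)=(1-s)T(x)+sT(y)$ stays in $T([\mathbf{0},r])$ for $s\in[0,1]$. Using the inverse function theorem pointwise on $[\mathbf{0},r]$, I would lift $\gamma$ by integrating $\beta'(s)=DT(\beta(s))^{-1}(T(y)-T(x))$ with $\beta(0)=x$; by construction $T(\beta(s))=\gamma(s)$, and the first paragraph forces $\beta'(s)\in K$, so $\beta$ is $K$-nondecreasing. A symmetric lift starting at $y$ and running $\gamma$ backwards yields a $K$-nonincreasing path from $y$ to a preimage of $T(x)$; matching the two lifts via local injectivity of $T$ (from the nonsingular Jacobian) identifies $\beta(1)$ with $y$ and simultaneously gives injectivity of $T$ on $[\mathbf{0},r]$. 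Combined with continuity on the compact box, this delivers the homeomorphism, and $y-x\in K$ with $x\ne y$ yields $x<_K y$.

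The componentwise strictness follows by tracking which entries of $P(T(y)-T(x))$ are positive. If $i\in H$ and $T_i(x)<T_i(y)$, then $[P(T(y)-T(x))]_i>0$, and since $[(I-PMP)^{-1}]_{ii}\ge 1$ a direct estimate gives $\beta_i'(s)>0$ on $[0,1]$, hence $x_i<y_i$; the $V$-case is dual, which proves the weakly type-K retrotone property. To upgrade to the stronger type-K property under (\ref{E7}), suppose for contradiction that $i\in H$, $y_i\ne 0$ and $x_i\ge y_i$. The $K$-monotonicity of $\beta$ forces $\beta_i\equiv y_i>0$ on $[0,1]$; then (\ref{E7}) makes every entry of the $i$-th row of $PM(\beta(s))P$ strictly positive, so the Neumann expansion gives $[(I-PM(\beta(s))P)^{-1}]_{ij}>0$ for every $j$, and since $P(T(y)-T(x))$ has at least one positive component this forces $\beta_i'(s)>0$, contradicting $\beta_i\equiv y_i$. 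The dual argument handles $j\in V$ with $x_j\ne 0$.

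The main obstacle will be the lifting step: showing $\beta$ exists on all of $[0,1]$, stays in $[\mathbf{0},r]$, satisfies $\beta(1)=y$, and yields injectivity of $T$. The subtlety is that $[\mathbf{0},r]$ has corners, so exit through a face $\pi_i$ must be excluded; here invariance of $\pi_i$ under $T$ (because $T_i\equiv 0$ on $\pi_i$) together with the $K$-sign of the lifted vector field should confine $\beta$ to the box. Extracting injectivity from the uniqueness of the lifted path combined with local injectivity of $T$ is the other delicate point; once these technicalities are handled, everything else reduces to routine manipulations of $PMP$ and its Neumann series.
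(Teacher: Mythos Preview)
Your sign--structure analysis via the conjugation $P$ and the Neumann series is correct and is exactly what the paper does in the language of the matrix class $\mathcal M$: the observation $(DT(x))^{-1}=(I-M(x))^{-1}\mathrm{diag}(1/f)\in\mathcal M$ is equivalent to your statement that $P\,DT(x)^{-1}P$ has nonnegative entries with diagonal $\ge 1$. Likewise your componentwise arguments for the weak and strong retrotone conclusions are essentially the same as the paper's once one integrates along the segment in the image.

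The genuine gap is the part you yourself flag as the ``main obstacle'': obtaining the homeomorphism and the identification $\beta(1)=y$ from the ODE lift. Your proposed ``matching via local injectivity'' does not work. If $T(x)=T(y)$ for distinct $x,y$, your forward lift is $\beta\equiv x$ and the backward lift is $\tilde\beta\equiv y$; local injectivity of $T$ says nothing unless these two points are already close, so no contradiction arises. More generally, for $T(x)<_K T(y)$ the forward lift $\beta$ and the backward lift $\tilde\beta$ may both solve the same ODE with different initial data and never coincide, precisely because $T$ could be a multi-sheeted local homeomorphism. Your confinement argument is also incomplete: the $K$-monotonicity of $\beta$ only controls exit through the faces $\{x_i=0\}$ for $i\in H$ and $\{x_j=r_j\}$ for $j\in V$; nothing you wrote excludes exit through $\{x_i=r_i\}$ for $i\in H$ or $\{x_j=0\}$ for $j\in V$, and outside $[\mathbf 0,r]$ the hypothesis $\rho(M)<1$ is unavailable.

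The paper decouples these issues. It first proves that $T$ is a homeomorphism on $[\mathbf 0,r]$ by a separate argument that uses the Kolmogorov form: since $T_i(x)=x_if_i(x)$ with $f_i>0$, one has $T(x)=\mathbf 0\iff x=\mathbf 0$, so the local homeomorphism $T$ has a single preimage over $\mathbf 0$; an external lemma (Lemma~4.1 in \cite{Her}) then upgrades local to global injectivity on the box. Once $T^{-1}$ exists on $T([\mathbf 0,r])$, your lift is simply $\beta(s)=T^{-1}\bigl((1-s)T(x)+sT(y)\bigr)$, automatically confined to $[\mathbf 0,r]$ with $\beta(1)=y$, and your integration argument
\[
y-x=\int_0^1 DT(\beta(s))^{-1}\bigl(T(y)-T(x)\bigr)\,ds
\]
goes through verbatim. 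In short: establish injectivity first, using the special feature $T^{-1}(\mathbf 0)=\{\mathbf 0\}$ of the Kolmogorov map, and the rest of your outline is correct.
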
 
\begin{proof}
	From condition (c) we see that $I-M(x)$ is invertible for all $x\in [\mathbf{0}, r]$ and 
	\[
	(I-M(x))^{-1} = \sum^{\infty}_{n=0}M(x)^n.
	\]
	From (\ref{e6}) we see that $DT(x)$ is invertible and $(DT(x))^{-1}$ is continuous on $[\mathbf{0}, r]$. So $T$ from $[\mathbf{0}, r]$ to $T([\mathbf{0}, r])$ is a local homeomorphism. As $T(x)=\mathbf{0}$ if and only if $x= \mathbf{0}$, by Lemma 4.1 in \cite{Her}, $T$ from $[\mathbf{0}, r]$ to $T([\mathbf{0}, r])$ is a homeomorphism. 
	
	By conditions (a) and (c) $(I-M(x))^{-1}\in \mathcal{M}$. It then follows from (\ref{e6}) that $(DT(x))^{-1}\in \mathcal{M}$ for all $x\in [\mathbf{0}, r]$. Now for any $x, y\in [\mathbf{0}, r]$ satisfying $T(x)<_K T(y)$, as $T(x), T(y)\in T([\mathbf{0}, r])$, by condition (b) we know that $\overline{T(x)T(y)} \subset T([\mathbf{0}, r])$. Then
	\[
	x-y = T^{-1}(T(x))-T^{-1}(T(y)) = \int^1_0\frac{d T^{-1}(g(s))}{ds}ds,
	\]
	where, by condition (b), $g(s) = T(y) + s(T(x)-T(y)) \in T([\mathbf{0}, r])$ for $s\in [0, 1]$. Since $u = T^{-1}(T(u))$ for $u\in [\mathbf{0}, r]$, we have $Du = I = DT^{-1}(T(u))\times DT(u)$. So 
	\[
	DT^{-1}(T(u)) =(DT(u))^{-1} \in \mathcal{M}.
	\]
	As $T(x)-T(y) <_K \mathbf{0}$, for any matrix $A\in \mathcal{M}$ we have $A(T(x)-T(y)) <_K\mathbf{0}$ and, for any $i\in H$ and $j\in V$, if $T_i(x)-T_i(y)<0$ then $[A(T(x)-T(y))]_i<0$ and if $T_j(x)-T_j(y)>0$ then $[A(T(x)-T(y))]_j>0$. Then
	\[
	\frac{d T^{-1}(g(s))}{ds} = DT^{-1}(g(s))g'(s) = DT^{-1}(g(s))(T(x)-T(y)) <_K\mathbf{0}
	\]
	and $x-y <_K \mathbf{0}$. Moreover, for any $i\in H$ and $j\in V$, if $T_i(x)-T_i(y)<0$ then $x_i - y_i<0$ and if $T_j(x)-T_j(y)>0$ then $x_j-y_j>0$. Thus, $T$ on $[\mathbf{0}, r]$ is weakly type-K retrotone.
	
	If (\ref{E7}) also holds and $T(x)<_K T(y)$, let $J\subset I_N$ such that 
	\[
	j\in J \Longleftrightarrow \max\{T_j(x), T_j(y)\}>0 \Longleftrightarrow \max\{x_j, y_j\}>0 
	\]
	so $x, y, T(x), T(y) \in C_J$. Then (\ref{E7}) implies that, restricted to $C_J$, $\frac{d T^{-1}(g(s))}{ds}\ll_K \mathbf{0}$ and $x-y \ll_K \mathbf{0}$. Hence, $T$ on $[\mathbf{0}, r]$ is type-K retrotone.	
\end{proof}

When $T$ from $[\mathbf{0}, r]$ to $T([\mathbf{0}, r])\subset [\mathbf{0}, r]$ is a homeomorphism, the basin of repulsion of $\mathbf{0}$ is the set
\begin{equation}\label{e8}
	\mathcal{R}(\mathbf{0}) = \{x\in C: \lim_{n\to\infty}T^{-n}(x) = \mathbf{0}\},
\end{equation}
which is an open set of $C$ and invariant. For each $x\in C$ and every $y\in\mathcal{A}$, with overline indicating set closure, the limit sets $\omega(x)$ and $\alpha(y)$ have the usual meaning:
\[
\omega(x) = \bigcap_{n=0}^{\infty}\overline{\{T^m(x): m\geq n\}}, \quad \alpha(y) = \bigcap_{n=0}^{\infty}\overline{\{T^{-m}(y): m\geq n\}}.
\]
For any point $x\in\R^n$ and a number $\delta>0$ we denote by $\mathcal{B}(x, \delta)$ the open ball in $\R^n$ centred at $x$ with radius $\delta$.

\begin{proposition}\label{pro2.4}
	In addition to (A1)--(A3) we assume that $T([\mathbf{0}, r])\subset [\mathbf{0}, r)$ and $T$ from $[\mathbf{0}, r]$ to $T([\mathbf{0}, r])$ is a homeomorphism and weakly type-K retrotone. Then 
	\begin{itemize}
		\item[(a)] The invariant compact set $\Sigma$ defined by (\ref{e5}) satisfies
		\begin{equation}\label{e9}
			\Sigma = \mathcal{A}\setminus \mathcal{R}(\mathbf{0}) = \Sigma_H\cup\Sigma_0\cup\Sigma_V,	
		\end{equation}
		where $\Sigma_0 = \Sigma\setminus(\Sigma_H\cup\Sigma_V)$ with $\Sigma_H\not=\emptyset$, $\Sigma_V\not=\emptyset$ and
		\begin{equation}\label{e10}
			\Sigma_H = \{x\in\Sigma: \alpha(x) \subset (\Sigma\cap C_H)\},\quad \Sigma_V = \{x\in\Sigma: \alpha(x) \subset (\Sigma\cap C_V)\}.	
		\end{equation} 
		\item[(b)] The set $\mathcal{R}(\mathbf{0})$ is connected and type-K convex.
		\item[(c)] For any $x, y\in \mathcal{A}$ with $x <_K y$ and $I =\{m\in I_N: x_m\not= y_m\}$, if $i\in H\cap I$ then $\lim_{n\to\infty}[T^{-n}(x)]_i =0$ and if $j\in V\cap I$ then $\lim_{n\to\infty}[T^{-n}(y)]_j =0$.
	\end{itemize}
	In addition, if $T$ is type-K retrotone, conclusion (c) above is replaced by (d):
	\begin{itemize}
		\item[(d)] For any $x, y\in \mathcal{A}$ with $x <_K y$, we have $\lim_{n\to\infty}[T^{-n}(x)]_H =\mathbf{0}$ and $\lim_{n\to\infty}[T^{-n}(y)]_V = \mathbf{0}$.
	\end{itemize}
\end{proposition}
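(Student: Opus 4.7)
For (a), I would prove $\Sigma=\mathcal{A}\setminus\mathcal{R}(\mathbf{0})$ by two inclusions. The inclusion $\Sigma\subset\mathcal{A}\setminus\mathcal{R}(\mathbf{0})$ is immediate since any $x\in\Sigma$ has a complete backward orbit trapped in $[\mathbf{0}, r]\setminus[\mathbf{0}, q)$. For the reverse, note that $[\mathbf{0}, q]\subset T([\mathbf{0}, q])$ with injectivity gives $T^{-1}([\mathbf{0}, q])\subset[\mathbf{0}, q]$, and since $f_i>1$ on $[\mathbf{0}, q]$ each positive component of a backward iterate in $[\mathbf{0}, q]$ strictly decreases; a monotone-convergence argument then shows backward orbits entering $[\mathbf{0}, q)$ converge to $\mathbf{0}$, so their starting points lie in $\mathcal{R}(\mathbf{0})$. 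Contrapositively, $x\in\mathcal{A}\setminus\mathcal{R}(\mathbf{0})$ gives $T^{-n}(x)\in[\mathbf{0}, r]\setminus[\mathbf{0}, q)$ for every $n$, i.e.\ $x\in\Sigma$. For $\Sigma_H,\Sigma_V\neq\emptyset$, restrict $T$ to the invariant face $C_H\cap[\mathbf{0}, r]$ (resp.\ $C_V\cap[\mathbf{0}, r]$): as a lower-dimensional competitive system satisfying the analogues of (A1)--(A3), the same construction produces a nonempty compact subset of $\Sigma\cap C_H$, and backward-invariance of $C_H$ gives $\alpha(x)\subset\Sigma\cap C_H$ for such $x$.

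For (b), type-K convexity follows because weak type-K retrotoneness makes $T^{-1}$ preserve $\leq_K$: for $x<_K y$ in $\mathcal{R}(\mathbf{0})$ and $z\in[x, y]_K$, induction yields $T^{-n}(x)\leq_K T^{-n}(z)\leq_K T^{-n}(y)$, and the coordinatewise squeeze together with $T^{-n}(x),T^{-n}(y)\to\mathbf{0}$ forces $T^{-n}(z)\to\mathbf{0}$. For connectedness I would prove the identity $\mathcal{R}(\mathbf{0})=\bigcup_{n\geq 0}T^n([\mathbf{0}, q])$: the $\supset$ direction uses $[\mathbf{0}, q]\subset\mathcal{R}(\mathbf{0})$ and forward-invariance of $\mathcal{R}(\mathbf{0})$; the $\subset$ direction uses that the backward orbit of any $x\in\mathcal{R}(\mathbf{0})$ eventually enters $[\mathbf{0}, q]$, so $x\in T^n([\mathbf{0}, q])$ for some $n$. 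Each $T^n([\mathbf{0}, q])$ is the continuous image of a convex set and hence connected, and each contains the fixed point $\mathbf{0}$; a union of connected sets sharing a common point is connected.

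For (c), the straightforward first step is a backward induction from weak type-K retrotoneness: setting $x^n=T^{-n}(x)$, $y^n=T^{-n}(y)$, one has $x^n<_K y^n$ and the strict component inequalities $x^n_i<y^n_i$ for $i\in H\cap I$ (resp.\ $x^n_j>y^n_j$ for $j\in V\cap I$) persist for all $n$. The main obstacle is upgrading these to the limit statements. I would use a monotone-ratio argument: the type-K sign pattern of $Df$ gives $f_i(y^n)\leq f_i(x^n)$ whenever $x^n\leq_K y^n$ for $i\in H$, so $R_n:=y^n_i/x^n_i$ is non-decreasing in $n$. If $R_n\to\infty$ then $x^n_i\to 0$ follows from $y^n_i\leq r_i$; if $R_n\to L<\infty$ then $f_i(y^n)/f_i(x^n)=R_{n-1}/R_n\to 1$, and extracting subsequential limits $(x^{n_k}, y^{n_k})\to(u, v)$ in the compact set $\mathcal{A}\times\mathcal{A}$ with $u\leq_K v$ yields $v_i=Lu_i$ and $f_i(u)=f_i(v)$. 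The strict diagonal sign $\partial f_i/\partial x_i<0$ applied along the $K$-ordered segment from $u$ to $v$ forces $f_i(v)<f_i(u)$ whenever $u_i>0$, so $u_i=0$ for every subsequential limit, giving $x^n_i\to 0$. (The case $x_i=0$ is trivial as $x^n_i\equiv 0$.) A symmetric monotone-ratio argument on $x^n_j/y^n_j$ for $j\in V$ handles $y^n_j\to 0$. For (d), type-K retrotoneness combined with the invariance $y_i>0\Leftrightarrow y^n_i>0$ extends the strict coordinate inequality from $H\cap I$ to every $i\in H$ with $y_i>0$; coordinates with $y_i=0$ force $x_i=0$ and $x^n_i\equiv 0$; the same ratio argument then yields $x^n_H\to\mathbf{0}$, and symmetrically $y^n_V\to\mathbf{0}$. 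The compactness-plus-monotone-ratio step in (c) is the technical core of the proof.
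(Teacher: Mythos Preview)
Your proposal is correct and follows essentially the same strategy as the paper, particularly in the core part (c), where your monotone-ratio argument with $R_n=y^n_i/x^n_i$ is exactly the paper's argument with $\Delta(n)=[T^{-n}(x)]_i/[T^{-n}(y)]_i=1/R_n$; the paper likewise extracts a subsequential limit $(\tilde x,\tilde y)\in\mathcal{A}\times\mathcal{A}$, obtains $f_i(\tilde x)=f_i(\tilde y)$ from the ratio converging, and derives a contradiction from the strict diagonal sign $\partial f_i/\partial x_i<0$ together with $\tilde x_i<\tilde y_i$.

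There are two minor differences worth noting. For connectedness in (b), the paper gives a direct path-connectedness argument: for $x,y\in\mathcal{R}(\mathbf{0})$ it chooses $m$ large enough that the straight segment $\overline{T^{-m}(x)\,T^{-m}(y)}$ lies in a small ball about $\mathbf{0}$ contained in $\mathcal{R}(\mathbf{0})$, and then pushes this segment forward by $T^m$. Your nested-union description $\mathcal{R}(\mathbf{0})=\bigcup_{n\geq 0}T^n([\mathbf{0},q])$ is an equally clean alternative. For the nonemptiness of $\Sigma_H$ and $\Sigma_V$ in (a), the paper does \emph{not} restrict to the faces $C_H$, $C_V$; instead it picks $x\in\mathcal{R}(\mathbf{0})\cap\dot C$, finds boundary points $y<_K x<_K z$ of $[\mathbf{0},r]$, locates $y'\in\Sigma\cap\overline{xy}$ and $z'\in\Sigma\cap\overline{xz}$, and then uses the weak type-K retrotone squeeze $T^{-n}(y')<_K T^{-n}(x)<_K T^{-n}(z')$ together with $T^{-n}(x)\to\mathbf{0}$ to force $\alpha(y')\subset C_V$ and $\alpha(z')\subset C_H$. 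This produces points of $\Sigma_H$, $\Sigma_V$ lying on the boundary $\partial\mathcal{R}(\mathbf{0})$, information that is reused later (e.g.\ in the planar analysis of $\Sigma$). Your face-restriction argument is simpler and suffices for the bare nonemptiness claim, but gives slightly less.
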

\begin{proof}
	(a) We first prove $\Sigma = \mathcal{A}\setminus \mathcal{R}(\mathbf{0})$. As $\Sigma$ is invariant and, from (\ref{e5}), $\Sigma\subset \mathcal{A}$ and $\mathbf{0}\not\in\Sigma$, for each $x\in\Sigma$ we have $\alpha(x)\subset\Sigma$ so $\alpha(x)\not= \{\mathbf{0}\}$ and $x\not\in\mathcal{R}(\mathbf{0})$. This shows that $\Sigma \subset \mathcal{A}\setminus \mathcal{R}(\mathbf{0})$. On the other hand, $T^n([\mathbf{0}, q)) \subset \mathcal{R}(\mathbf{0})$ from the choice of $q$, so $(\mathcal{A}\setminus \mathcal{R}(\mathbf{0}))\subset T^n([\mathbf{0}, r]\setminus[\mathbf{0}, q))$ for all integer $n\geq 0$. It follows from this and (\ref{e5}) that $(\mathcal{A}\setminus \mathcal{R}(\mathbf{0}))\subset \Sigma$. Therefore, the equality $\Sigma = \mathcal{A}\setminus \mathcal{R}(\mathbf{0})$ holds.
	
	For each $x\in \mathcal{R}(\mathbf{0})\cap\dot{C}$, as $\mathcal{A}\subset [\mathbf{0}, r)$, there are points $y, z \in [\mathbf{0}, r]\setminus[\mathbf{0}, r)$ such that $y<_K x <_K z$. Then there are $y'\in(\Sigma\cap\overline{xy})$ and $z'\in(\Sigma\cap\overline{xz})$ so that 
	\[
	y<_K y' <_K x <_K z' <_K z. 
	\]
	Since $T$ on $[\mathbf{0}, r]$ is weakly type-K retrotone, we have 
	\[
	T^{-n}(y') <_K T^{-n}(x) <_K T^{-n}(z'), n= 0, 1, 2, \ldots.
	\]
	Thus, for all integers $n\geq 0$,
	\[
	\forall i\in H, 0\leq [T^{-n}(y')]_i \leq [T^{-n}(x)]_i;\quad \forall j\in V,  0\leq [T^{-n}(z')]_j \leq [T^{-n}(x)]_j.
	\]
	These inequalities and $\lim_{n\to\infty}T^{-n}(x) = \mathbf{0}$ imply that $\alpha(y') \subset (\Sigma\cap C_V)$ and $\alpha(z') \subset (\Sigma\cap C_H)$, i.e. $y'\in \Sigma_V$ and $z'\in \Sigma_H$. So $\Sigma_H\not=\emptyset$ and $\Sigma_V\not=\emptyset$.
	
	(b) By (A2) there is a $\delta>0$ such that $(\mathcal{B}(\mathbf{0}, \delta)\cap C) \subset \mathcal{R}(\mathbf{0})$. For any two distinct points $x, y\in\mathcal{R}(\mathbf{0})$, as $\lim_{n\to\infty}T^{-n}(x) =0$ and $\lim_{n\to\infty}T^{-n}(y) =0$, there is an integer $m>0$ such that the line segment $\overline{T^{-m}(x)T^{-m}(y)}$ is contained in $\mathcal{B}(\mathbf{0}, \delta)\cap C$. Then $T^m(\overline{T^{-m}(x)T^{-m}(y)})$ is a curve in $\mathcal{R}(\mathbf{0})$ from $x$ to $y$. Thus, $\mathcal{R}(\mathbf{0})$ is connected. For any two distinct points $u, v\in\mathcal{R}(\mathbf{0})$ satisfying $u<_K v$ we check that $\overline{uv}\subset \mathcal{R}(\mathbf{0})$. Suppose there is a $w\in \overline{uv}\setminus \mathcal{R}(\mathbf{0})$. Since $\mathcal{R}(\mathbf{0})$ is open and $\mathcal{A}$ is closed, we may take $w\in\Sigma$ so $u<_K w <_K v$. As $T$ is weekly type-K retrotone, we have $T^{-n}(u) <_K T^{-n}(w) <_K T^{-n}(v)$ for all $n\geq 0$. Then it follows from $\lim_{n\to\infty}T^{-n}(u) =0$ and $\lim_{n\to\infty}T^{-n}(v) =0$ that $\lim_{n\to\infty}T^{-n}(w) =0$, a contradiction to $w\not\in\mathcal{R}(\mathbf{0})$. Thus, $\overline{uv}\subset \mathcal{R}(\mathbf{0})$ and $\mathcal{R}(\mathbf{0})$ is type-K convex.
	
	(c) Since $T$ is weakly type-K retrotone and $\mathcal{A}$ is invariant, from $x<_K y$ we have $T^{-n}(x) <_K T^{-n}(y)$ for all $n\geq 0$ and
	\[
	\forall i\in H\cap I, [T^{-n}(x)]_i < [T^{-n}(y)]_i; \quad \forall j\in  V\cap I, [T^{-n}(x)]_j > [T^{-n}(y)]_j
	\]
	for all $n\geq 0$. From (A1) we see that $f_i$ is decreasing in the $i$th component, nonincreasing in the first $k$ components and nondecreasing in the last $N-k$ components. Also, $f_j$ is decreasing in the $j$th component, nonincreasing in the last $N-k$ components and nondecreasing in the first $k$ components. Then the above inequalities imply that, for all $n\geq 0$,
	\[
	\forall i\in H\cap I, f_i(T^{-n}(x)) > f_i(T^{-n}(y)); \quad \forall j\in  V\cap I, f_j(T^{-n}(x)) < f_j(T^{-n}(y)).
	\]
	Suppose $\lim_{n\to\infty}[T^{-n}(x)]_i \not= 0$ for some $i\in H\cap I$. Then $x_i>0$ and $0< [T^{-n}(x)]_i < [T^{-n}(y)]_i$ for all $n\geq 0$. By the compactness of $\mathcal{A}$ and $T^{-n}(x), T^{-n}(y) \in\mathcal{A}$, there exist an increasing sequence $\{\sigma(n)\}\subset\N$ and $\tilde{x},\tilde{y} \in\mathcal{A}$ with $\tilde{x} \leq_K \tilde{y}$ and $\tilde{x}_i>0$ such that $\lim_{n\to\infty}T^{-\sigma(n)}(x) = \tilde{x}$ and $\lim_{n\to\infty}T^{-\sigma(n)}(y) = \tilde{y}$. Then, since $\Delta(n) = \frac{[T^{-n}(x)]_i}{[T^{-n}(y)]_i} <1$ for all $n\in\N$,
	\[
	\Delta(n) = \frac{T_i(T^{-(n+1)}(x))}{T_i(T^{-(n+1)}(y))} = \Delta(n+1) \frac{f_i(T^{-(n+1)}(x))}{f_i(T^{-(n+1)}(y))} >\Delta(n+1). 
	\]
	As $\Delta(n)$ is decreasing and $\lim_{n\to\infty}\Delta(\sigma(n))= \frac{\tilde{x}_i}{\tilde{y}_i}$, we have $\lim_{n\to\infty}\Delta(n) = \frac{\tilde{x}_i}{\tilde{y}_i} \in (0, 1)$ and 
	\[
	\frac{\tilde{x}_i}{\tilde{y_i}} =\lim_{n\to\infty}\Delta(\sigma(n)-1) =\lim_{n\to\infty} \frac{T_i(T^{-\sigma(n)}(x))}{T_i(T^{-\sigma(n)}(y))} = \frac{T_i(\tilde{x})}{T_i(\tilde{y})} =\frac{\tilde{x}_i}{\tilde{y_i}}\frac{f_i(\tilde{x})}{f_i(\tilde{y})}.
	\] 
	It immediately follows that $f_i(\tilde{x}) = f_i(\tilde{y})$. But from (A1), $\tilde{x}<_K \tilde{y}$ and $\tilde{x}_i < \tilde{y}_i$ we should have $f_i(\tilde{x}) > f_i(\tilde{y})$, a contradiction. Therefore, we must have $\lim_{n\to\infty}[T^{-n}(x)]_i = 0$ for $i\in H\cap I$. By the same reasoning as above, we also have $\lim_{n\to\infty}[T^{-n}(y)]_j =0$ for all $j\in V\cap I$.
	
	(d) If $T$ is type-K retrotone, for any $x, y\in\mathcal{A}$ with $x<_K y$ and $J = \{j\in I_N: \max\{x_j, y_j\}>0\}$, we have $x, y, T^{-n}(x), T^{-n}(y) \in C_J$. Then restricting to $C_J$ we obtain $T^{-n}(x)\ll_K T^{-n}(y)$ for all $n>0$. The conclusion follows from applying the above reasoning to each $i\in H\cap J$ and $j\in V\cap J$.
\end{proof}

Note that the type-K convexity of a set means that, for any straight line that distinct points on it are related by $<_K$, the intersection of the line with the set (if any) is only ONE line segment. This implies that the set is ``solid'' with no ``air bubbles'' completely surrounded by the points of the set.

\begin{proposition}\label{pro2.5}
	The following hold under the assumptions of Proposition \ref{pro2.4}. 
	\begin{itemize}
		\item[(a)] The set $\Sigma_H\cap C_H = \Sigma\cap C_H$ ($\Sigma_V\cap C_V = \Sigma\cap C_V$) is the global attractor of the system restricted to $C_H\setminus\{\mathbf{0}\}$ ($C_V\setminus\{\mathbf{0}\}$), and it is a repeller on $\Sigma$.The set $\Sigma_0$ is nonempty compact invariant and it is the global attractor of the system on $C\setminus(C_H\cup C_V)$. 
		\item[(b)] The sets $\Sigma_H\cup\Sigma_0$ and $\Sigma_H\cup\Sigma_0$ are unordered in $\ll_K$.
	\end{itemize}
	If $T$ is type-K retrotone, then ``$\ll_K$'' in conclusion (b) is replaced by ``$<_K$''.
\end{proposition}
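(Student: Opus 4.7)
I handle part (b) first since it is a quick consequence of Proposition~\ref{pro2.4}(c)--(d), then treat part (a) in four steps.

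For part (b), suppose $x,y\in\Sigma_H\cup\Sigma_0$ with $x\ll_K y$. Because $\ll_K$ separates every coordinate strictly, the index set $I$ of Proposition~\ref{pro2.4}(c) satisfies $H\subset I$, and that proposition yields $[T^{-n}(x)]_i\to 0$ for all $i\in H$. Thus $\alpha(x)\subset\Sigma\cap C_V$, forcing $x\in\Sigma_V$. But $\Sigma_H\cap\Sigma_V=\emptyset$ (a common point would require $\alpha(x)\subset C_H\cap C_V=\{\mathbf{0}\}\not\subset\Sigma$) and $\Sigma_0\cap\Sigma_V=\emptyset$ by definition, contradicting $x\in\Sigma_H\cup\Sigma_0$. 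The assertion for $\Sigma_V\cup\Sigma_0$ is symmetric. When $T$ is type-K retrotone, Proposition~\ref{pro2.4}(d) delivers the same conclusion from the weaker hypothesis $x<_K y$.

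For part (a), step one proves $\Sigma_H\cap C_H=\Sigma\cap C_H$. The inclusion $\subset$ is immediate. For $\supset$, observe that $C_H$ is both forward and backward invariant on $\mathcal{A}$: forward because $T_j(x)=x_jf_j(x)$ vanishes whenever $x_j=0$, backward because if $T^{-1}(x)$ had $y_j>0$ for some $j\in V$ then $x_j=y_jf_j(y)>0$, contradicting $x\in C_H$. Hence for $x\in\Sigma\cap C_H$ the full backward orbit stays in $\Sigma\cap C_H$, giving $\alpha(x)\subset\Sigma\cap C_H$ and so $x\in\Sigma_H$. Step two identifies $\Sigma\cap C_H$ as the global attractor on $C_H\setminus\{\mathbf{0}\}$: the restriction $T|_{C_H}$ is a $k$-dimensional Kolmogorov competitive system inheriting (A1)--(A3) and the hypotheses of Proposition~\ref{pro2.3}, and so falls under the classical carrying-simplex theory of \cite{Her,Hou2,MieBai}; its attractor is $\mathcal{A}\cap C_H\setminus\mathcal{R}(\mathbf{0})=\Sigma\cap C_H$.

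Step three establishes that $\Sigma\cap C_H$ is a repeller on $\Sigma$, and is the principal technical point. I would construct a compact isolating neighborhood $N$ of $\Sigma\cap C_H$ in $\Sigma$ such that any $x\in N\setminus(\Sigma\cap C_H)$ eventually leaves $N$ under forward iteration. The critical sub-claim is a uniform escape estimate: for $x\in\Sigma$ with $x_j>0$ for some $j\in V$, $[T^n(x)]_j$ is bounded away from $0$ along some subsequence. I would argue by contradiction with a ratio method in the spirit of Proposition~\ref{pro2.4}(c): extracting $T^{n_k}(x)\to y\in\Sigma\cap C_H$ and using the identity $[T^{n+1}(x)]_j/[T^n(x)]_j=f_j(T^n(x))$ together with continuity of $f$ and the type-K cooperation of $H$-coordinates on $f_j$, one derives an incompatibility with the behaviour of a comparison orbit on $\Sigma\cap C_H$. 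The symmetric statement on $C_V$ then yields the two-sided isolation needed later.

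Step four handles $\Sigma_0$. Invariance is immediate since $\alpha(T^{\pm 1}(x))=\alpha(x)$. For compactness, step three makes $\Sigma\cap C_H$ and $\Sigma\cap C_V$ isolated invariant repellers on $\Sigma$, so their backward basins $\Sigma_H$ and $\Sigma_V$ are open in $\Sigma$; hence $\Sigma_0=\Sigma\setminus(\Sigma_H\cup\Sigma_V)$ is closed and therefore compact. For nonemptiness and the global attractor statement: $C\setminus(C_H\cup C_V)$ is forward invariant, $\omega(x)\subset\Sigma$ for any $x$ there, and step three forbids $\omega(x)$ from meeting $C_H\cup C_V$, so $\omega(x)\subset\Sigma_0$. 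The hard part is step three: making the escape estimate genuinely uniform, rather than merely intermittent, is the delicate point; once that is in place, the rest assembles routinely.
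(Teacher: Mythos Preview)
Your treatment of part (b) is correct and matches the paper's one-line ``immediate from Proposition~\ref{pro2.4}(c)--(d)''. Step~1 of part (a) is fine; Step~2 is correct but heavier than needed (the paper simply observes that $C_H\setminus\{\mathbf 0\}$ is forward invariant and that $\Sigma$ already attracts $C\setminus\{\mathbf 0\}$, so no carrying-simplex theory is invoked).

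The genuine gap is Step~3. Your proposed subsequence-plus-ratio argument is not fleshed out, and you yourself flag that uniformity versus mere intermittency is unresolved. The paper avoids this difficulty entirely by exploiting a structural monotonicity you have not identified: for $j\in V$, assumption (A1) makes $f_j$ nondecreasing in the $H$-coordinates and nonincreasing in the $V$-coordinates, so for any $x$ one has $f_j(x)\ge f_j(x_V)$. Now if $\|x_V\|<\varepsilon$ then $x_V$ lies in a neighbourhood of the origin where (A2) gives $f(x_V)\gg(1+\delta)\mathbf 1$. Combining, $T_j(x)=x_jf_j(x)>(1+\delta)x_j$ for every $x$ in the full tube $S(C_H,\varepsilon)=\{x:\|x_V\|<\varepsilon\}$ and every $j\in V$ with $x_j>0$. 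This is a pointwise exponential escape estimate, uniform on the whole neighbourhood, with no limits or subsequences needed; the symmetric estimate holds near $C_V$. The paper also checks forward invariance of the complements $[\mathbf 0,r]\setminus S(C_H,\varepsilon)$ and $[\mathbf 0,r]\setminus S(C_V,\varepsilon)$ by the same projection trick, so once an orbit leaves the tube it never returns.

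With this direct escape estimate in hand, the paper builds $\Sigma_0$ constructively as $S_0=\bigcap_{n\ge 0}T^n\bigl([\mathbf 0,r]\setminus(S(C_H,\varepsilon)\cup S(C_V,\varepsilon))\bigr)$ and then proves $S_0=\Sigma_0$ by double inclusion; compactness, invariance, nonemptiness and the attractor property on $C\setminus(C_H\cup C_V)$ all fall out of this nested-intersection description. Your Step~4 route via openness of $\Sigma_H,\Sigma_V$ would also work once Step~3 is repaired, but the repair is precisely the monotonicity observation $f_j(x)\ge f_j(x_V)$ that you are missing.
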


Then proof of Proposition \ref{pro2.5} is left to Section 4. From these results and \cite{Hou3}, we are tempted to expect that $\Sigma$ for system (\ref{e1}) would have the same features as those of the global attractor of a type-K competitive differential system restricted to $C\setminus\{\mathbf{0}\}$. What are the geometric and dynamical features of $\Sigma$? Is $\Sigma_H$, $\Sigma_V$ or $\Sigma$ an $(N-1)$-dimensional surface? Is each forward half orbit of (\ref{e1}) in $C\setminus\{\mathbf{0}\}$ asymptotic to one in $\Sigma$? There are many intriguing problems waiting for our exploration. Since this paper is mainly dealing with planar systems, further investigation of $N$-dimensional type-K competitive systems will not be included here.

\section{Global dynamics of planar type-K competitive systems}
In this section, we focus our attention to the planar system (\ref{e1}) with $N=2$, where $T$ in (\ref{e2}) is rewritten as 
\begin{equation}\label{e11}
	T_1(x) = x_1f_1(x_1, x_2), T_2(x) = x_2f_2(x_1, x_2), x =(x_1, x_2) \in \R^2_+ = C.
\end{equation}

\begin{proposition}\label{pro3.1}
	Assume that (\ref{e1}) with (\ref{e11}) is type-K competitive and $\rho(M(x))<1$ for all $x\in [\mathbf{0}, r]$. Then $T([\mathbf{0}, r])$ is type-K convex.
\end{proposition}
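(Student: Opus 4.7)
My approach is to control the boundary of $T([\mathbf{0},r])$ using the sign structure of $(DT(x))^{-1}$ and then deduce type-K convexity from a short projection-monotonicity argument. First, I would rerun the opening of the proof of Proposition~\ref{pro2.3}: conditions (a) and $\rho(M(x))<1$ yield $(I-M(x))^{-1}=\sum_{n\ge 0}M(x)^n\in\mathcal M$ and consequently $(DT(x))^{-1}=(I-M(x))^{-1}\textup{diag}(1/f_1,1/f_2)\in\mathcal M$ for every $x\in[\mathbf{0},r]$. For $N=2$, $\mathcal M$ consists of $2\times 2$ matrices with positive diagonal and nonpositive off-diagonal entries. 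Since $\rho(M(x))<1$ forces $m_{ii}(x)<1$, we get $\partial_iT_i(x)=f_i(x)(1-m_{ii}(x))>0$ and $\det DT(x)>0$; combined with $T(\mathbf{0})=\mathbf{0}$ and the argument of Proposition~\ref{pro2.3}, this gives that $T:[\mathbf{0},r]\to T([\mathbf{0},r])$ is a $C^1$ homeomorphism.

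Next I would analyze $\partial T([\mathbf{0},r])=T(\partial[\mathbf{0},r])$. Because $T_i=x_if_i$ vanishes with $x_i$, the images of the bottom and left edges of $[\mathbf{0},r]$ lie on the $x_1$- and $x_2$-axes respectively, while the right and top edges map to smooth arcs meeting at $T(r)$. At every smooth boundary point $T(x)$, the outward normal to $\partial T([\mathbf{0},r])$ is proportional to $(DT(x))^{-T}n(x)$, where $n(x)$ is the outward normal to $\partial[\mathbf{0},r]$ at $x$; reading off the sign pattern of $(DT)^{-T}\in\mathcal M$, this normal lies in $K$ on the images of the bottom and right edges and in $-K$ on the images of the top and left edges. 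Thus $\partial T([\mathbf{0},r])$ splits into a ``$K$-arc'' (bottom $\cup$ right image, from $\mathbf{0}$ through $(r_1f_1(r_1,0),0)$ to $T(r)$) and a ``$-K$-arc'' (top $\cup$ left image, from $T(r)$ through $(0,r_2f_2(0,r_2))$ back to $\mathbf{0}$), joined at the two jump corners $\mathbf{0}$ and $T(r)$. Using $\partial_iT_i>0$, the counterclockwise image tangent $DT(x)\cdot(\text{tangent to }\partial[\mathbf{0},r])$ lies in the closed first quadrant along the $K$-arc and in the closed third quadrant along the $-K$-arc.

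For any $u<_K v$ in $T([\mathbf{0},r])$, set $w=v-u\in K\setminus\{\mathbf{0}\}$ and $w^{\perp}=(-w_2,w_1)\in\R^2_+$. The tangent along the $K$-arc has nonnegative inner product with $w^{\perp}$, while the tangent along the $-K$-arc has nonpositive inner product with $w^{\perp}$; hence the projection of each arc onto $w^{\perp}$ is monotone, and the line $L$ through $u$ and $v$ meets each arc in at most one connected piece. Walking along $L$ in the direction $w$, any entry into $T([\mathbf{0},r])$ occurs across the $-K$-arc (where $n\cdot w\le 0$) and any exit across the $K$-arc (where $n\cdot w\ge 0$). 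If $L\cap T([\mathbf{0},r])$ were disconnected, it would contain at least two entries and two exits, forcing at least two crossings on each arc and contradicting the bound above. Therefore $L\cap T([\mathbf{0},r])$ is a single closed interval containing both $u$ and $v$, so $\overline{uv}\subset T([\mathbf{0},r])$, as required.

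I expect the main obstacle to be the corner analysis: one must verify that the outward-normal jumps between $K$ and $-K$ occur only at $\mathbf{0}$ and $T(r)$, so that the $K$-/$-K$-arc decomposition is unambiguous, and that the projection-monotonicity argument survives across the two ``smooth'' corners $(r_1f_1(r_1,0),0)$ and $(0,r_2f_2(0,r_2))$, where the image tangent jumps but stays within the same closed quadrant.
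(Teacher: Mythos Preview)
Your approach is correct and genuinely different from the paper's. The paper first proves a preparatory Lemma~\ref{lem5.1}: if a $K$-ordered segment $\overline{T(x)T(y)}$ lies in $T([\mathbf{0},r])$ and is non-extendable, then $x$ lies on the upper-left boundary $\ell_2$ and $y$ on the lower-right boundary $\ell_1$ of $[\mathbf{0},r]$. The proof of Proposition~\ref{pro3.1} then argues by contradiction: if $\overline{T(x)T(y)}\not\subset T([\mathbf{0},r])$, one obtains two disjoint non-extendable parallel segments $\overline{T(u)T(v)}$ and $\overline{T(p)T(q)}$ on the same line; their preimages, together with pieces of $\ell_1,\ell_2$, bound a region $R\subset[\mathbf{0},r]$, and a parallel line through a nearby boundary point $T(a)$ is shown to produce a non-extendable segment whose preimage endpoints violate the $\ell_1/\ell_2$ dichotomy of Lemma~\ref{lem5.1}.

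Your argument bypasses Lemma~\ref{lem5.1} entirely, working directly with the image boundary: you use $(DT)^{-T}\in\mathcal{M}$ to split $\partial T([\mathbf{0},r])$ into a $K$-arc and a $-K$-arc, observe that the counterclockwise image tangent lives in the closed first (resp.\ third) quadrant along each, and deduce that the projection onto $w^{\perp}\in\R^2_+$ is monotone on each arc, so any $K$-line meets each arc in a single connected piece. This is shorter and more geometric, and the monotone-projection idea could plausibly be pushed to higher dimensions. The paper's route, by contrast, stays closer to the weakly type-K retrotone machinery already set up in Proposition~\ref{pro2.3} and avoids the differential-geometric normal transformation. Your caveat about the corner analysis is well placed but benign: as you note, the normal cone at the images of $(r_1,0)$ and $(0,r_2)$ remains inside $K$ (resp.\ $-K$), so the two arcs meet only at $\mathbf{0}$ and $T(r)$; the final Jordan-curve crossing count (two entries on the $-K$-arc, two exits on the $K$-arc, each forced into a single connected sub-arc of $L$) then goes through once one observes that a connected boundary piece lying on $L$ cannot separate an exit at $t_1$ from an exit at $s_2>t_1$ while leaving $(t_1,t_2)$ outside the region.
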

The proof of this proposition is postponed to Section 5. 
\begin{remark}
	From Proposition \ref{pro3.1} we see that for a planar system the condition (b) in Proposition \ref{pro2.3} is redundant. Thus, the conditions of  Proposition \ref{pro3.1} ensure that $T$ is a homeomorphism from $[\mathbf{0}, r]$ to $T([\mathbf{0}, r])$ and a weakly type-K retrotone map. Moreover, if (\ref{E7}) holds then $T$ is type-K retrotone.
\end{remark} 

For $N=2$ we can easily derive a necessary and sufficient condition for $\rho(M(x))<1$. Indeed, $\det(M(x) -\rho I) = 0$ if and only if
\[
\rho^2 - \textup{tr}(M(x))\rho + \det(M(x)) =0,
\]
which gives
\[
2\rho = \textup{tr}(M(x)) \pm \sqrt{[\textup{tr}(M(x))]^2 - 4\det(M(x))}.
\]
By (A1) we have $\textup{tr}(M(x)) >0$, and from (\ref{e6}) and (\ref{e7}) we can check that $[\textup{tr}(M(x))]^2 - 4\det(M(x))>0$, for $x\in C\setminus\{\mathbf{0}\}$. Then $\rho(M(x))<1$ if and only if 
\begin{equation}\label{e12}
	\textup{tr}(M(x)) <\min\{2, 1+\det(M(x))\}.
\end{equation}
In some circumstances, (\ref{e12}) might be convenient to use for $\rho(M(x)) <1$.

A curve $\ell\subset \R^N$ is called monotone in $<$ $(\ll, <_K, \ll_K)$ if every pair of distinct points on $\ell$ are related by $<$ $(\ll, <_K, \ll_K)$.

\begin{theorem}\label{the3.2}
	For (\ref{e1}) with (\ref{e11}) assume (A2) and the existence of $r\gg\mathbf{0}$ such that $\omega(x)\subset [\mathbf{0}, r]$ for all $x\in C$, $T([\mathbf{0}, r])\subset [\mathbf{0}, r)$ and $\rho(M(x))<1$ for all $x\in [\mathbf{0}, r]$. Assume also that (\ref{e1}) with (\ref{e11}) is type-K competitive on $[\mathbf{0}, r]$. Then we have the following conclusions.
	\begin{itemize}
		\item[(a)] The system has one fixed point $Q_i$ on each positive half $x_i$-axis, which is a saddle point with its unstable manifold $W^u(Q_i)\subset\dot{C}$ and the positive half $x_i$-axis as its stable manifold $W^s(Q_i)$.
		\item[(b)] The map $T$ has at least one fixed point $p_0$ in $\dot{C}$. If $p_0$ is the unique fixed point in $\dot{C}$ then it is globally asymptotically stable in $\dot{C}$. Otherwise, $T$ has at least another fixed point $p_1\gg p_0$ such that $T^n(x)$ is eventually monotone in $\ll_K$ or $<$ and converges to a fixed point in $[p_0, p_1]$ for every $x\in\dot{C}$.
		\item[(c)] For the global attractor $\Sigma = \mathcal{A}\setminus\mathcal{R}(\mathbf{0})$ on $C\setminus\{\mathbf{0}\}$ with the decomposition $\Sigma = \Sigma_H\cup\Sigma_0\cup\Sigma_V$ (see Figure 1), we have
		\[
		\Sigma_H = W^u(Q_1)\cup\{Q_1\}, \quad \Sigma_V = W^u(Q_2)\cup\{Q_2\},
		\]
		both are monotone curves in $<$, $\Sigma_0 =\{p_0\}$ or $\Sigma_0$ is a monotone curve in $\ll$ from $p_0$ to $p_1$. If $\frac{\partial f_1(x)}{\partial x_2} >0$ and $\frac{\partial f_2(x)}{\partial x_1} >0$ for $x\in [\mathbf{0}, r]$ then $\Sigma_H$ and $\Sigma_V$ are monotone in $\ll$.
	\end{itemize}
\end{theorem}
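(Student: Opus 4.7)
I handle the three parts in the order (a), (c), (b), since (c) is a clean consequence of Propositions~\ref{pro2.4}--\ref{pro2.5} and then feeds into (b). By the standing assumptions together with Proposition~\ref{pro3.1}, the hypotheses of Propositions~\ref{pro2.3}--\ref{pro2.5} are in force: $T|_{[\mathbf{0},r]}$ is a homeomorphism onto its image and weakly type-K retrotone (type-K retrotone if (\ref{E7}) holds, which is implied by the strict cooperative hypotheses in (c) combined with $\partial f_i/\partial x_i<0$).

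For part (a), I restrict $T$ to the 1-D Kolmogorov map $x_1\mapsto x_1 f_1(x_1,0)$ on $C_H$. The weakly type-K retrotone property descends to a 1-D retrotone map, which together with the strict negativity of $\partial f_1/\partial x_1$, (A2), and dissipativity yields a unique positive fixed point $Q_1$ globally attracting on $C_H\setminus\{\mathbf{0}\}$. To obtain the saddle structure I compute $DT(Q_1)$ from (\ref{e6})--(\ref{e7}): since $(Q_1)_2=0$, the second row of $M(Q_1)$ vanishes, so $DT(Q_1)$ is upper triangular with eigenvalues $1-m_{11}(Q_1)\in(0,1)$ (stable, along the $x_1$-axis) and $f_2(Q_1)\geq f_2(\mathbf{0})>1$ (unstable, transverse). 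The unstable eigenvector $v$ satisfies $v_2>0$ and $v_1\geq 0$, so the local unstable manifold enters $\dot{C}$; forward $T$-iteration inside $[\mathbf{0},r]$ extends it to the global $W^u(Q_1)$, while the positive $x_1$-axis is the global stable manifold by the 1-D attraction. Symmetric conclusions hold at $Q_2$.

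For part (c), Proposition~\ref{pro2.5}(a) combined with the 1-D analysis gives $\Sigma\cap C_H=\{Q_1\}$ and $\Sigma\cap C_V=\{Q_2\}$, so (\ref{e10}) reads $\Sigma_H=\{x\in\Sigma:\alpha(x)=\{Q_1\}\}$. Any $x$ with $T^{-n}(x)\to Q_1$ lies in $\mathcal{A}\setminus\mathcal{R}(\mathbf{0})=\Sigma$ and, conversely, every $x\in\Sigma_H$ satisfies $T^{-n}(x)\to Q_1$, so $\Sigma_H=W^u(Q_1)\cup\{Q_1\}$; similarly for $\Sigma_V$. For the monotone-curve description I exploit Proposition~\ref{pro2.5}(b): $\Sigma_H\cup\Sigma_0$ is unordered in $\ll_K$, and in $\R^2$ two distinct points that fail to be $\ll_K$-comparable must either be $\leq$-comparable or share a coordinate. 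Near $Q_1$ the tangency of $W^u(Q_1)$ to the nonnegative vector $v$ above rules out coordinate coincidences and gives $<$-monotonicity locally; the chain property propagates this along the whole curve. Under the strict cooperative hypotheses, (\ref{E7}) holds, so Proposition~\ref{pro2.5}(b) upgrades ``unordered in $\ll_K$'' to ``unordered in $<_K$'', which by the same plane-geometric dichotomy forces strict $\ll$-comparability of distinct points. The same argument applied to $\Sigma_0$ shows it is a $\leq$-chain (strictly $\ll$-chain under the cooperative strengthening), hence either a single point or a monotone arc.

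For part (b), $\Sigma_0\subset\dot{C}$ is nonempty compact and invariant under the homeomorphism $T$. If $\Sigma_0=\{p_0\}$ then $p_0$ is fixed and Proposition~\ref{pro2.5}(a) gives its global asymptotic stability on $\dot{C}$. Otherwise $\Sigma_0$ is a monotone arc; its extreme points $p_0,p_1$ are fixed under $T$ (the $\leq$-chain structure together with the weakly type-K retrotone restriction forbids $T$ from reversing the orientation of the arc), and $p_0\ll p_1$ since they cannot share a coordinate. For a general $x\in\dot{C}$, global attraction gives $\omega(x)\subset\Sigma_0$; once the orbit enters a thin neighbourhood of the chain, the weakly type-K retrotone property combined with a Jordan-curve argument (the chain $\Sigma_0$ separates a small tube into two monotone wedges corresponding to $\leq$ and $\leq_K$) forces successive iterates into a single wedge, yielding eventual $\ll_K$- or $<$-monotonicity and convergence to a fixed point in $[p_0,p_1]$. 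The main obstacle I anticipate is exactly this last step: the retrotone property is one-directional (backward), so turning it into forward eventual monotonicity requires a genuinely two-dimensional Jordan-curve argument that rules out perpetual oscillation of an orbit across $\Sigma_0$. The continuous analogue treated in \cite{Hou3} is easier because trajectories are continuous; the discrete jumps here are what make this the delicate step.
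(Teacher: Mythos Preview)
Your approach differs substantially from the paper's, and the difference is exactly where your acknowledged gap lies. The paper proves (b) \emph{before} (c), by introducing the nullclines $\ell_i=\{x\in C:f_i(x)=1\}$, which by (A1) are graphs of nondecreasing $C^1$ functions and partition $[\mathbf{0},r]$ into regions $R_1,\dots,R_4$ (and possibly further regions $R$ between interior fixed points) on which $T(x)-x$ has a definite sign pattern: $T(x)>x$ on $R_1$, $T(x)<x$ on $R_2$, $T(x)\ll_K x$ on $R_3$, $T(x)\gg_K x$ on $R_4$. The heart of the argument is showing, via the weakly type-K retrotone property together with connectedness/homeomorphism arguments, that $\overline{R_1}$, $\overline{R_2}$ and each $\overline{R}$ are \emph{forward} invariant and that an orbit cannot oscillate between $R_3$ and $R_4$. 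Eventual monotonicity of every interior orbit is then immediate. Part (c) in the paper relies on these region facts (notably the forward invariance of $R_1$) to identify $p_0$ as the tip of $\overline{\mathcal{R}(\mathbf{0})}$ and to confine $\Sigma_0$ to $[p_0,p_1]$.

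Your reordering (a)--(c)--(b) is reasonable in spirit, and your handling of (a) and of the identification $\Sigma_H=W^u(Q_1)\cup\{Q_1\}$ is fine. But several steps do not go through as stated. In (c), ``$\leq$-chain'' does not imply ``arc'': you need connectedness of $\Sigma_0$ (obtainable from the nested-intersection description in the proof of Proposition~\ref{pro2.5}) and then a projection argument. Your claim that the retrotone property forbids $T$ from reversing orientation on $\Sigma_0$ is unsupported: points of $\Sigma_0$ are $\ll$-related, not $<_K$-related, so the retrotone hypothesis says nothing directly about $T|_{\Sigma_0}$. Most importantly, your plan for eventual forward monotonicity in (b) is not a proof, as you yourself flag: the retrotone property is backward, and nothing you have set up prevents a discrete orbit from jumping across $\Sigma_0$ infinitely often. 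The nullcline-region decomposition is precisely the missing ingredient that produces forward-invariant sets on which $T(x)-x$ has fixed sign; without it or an equivalent device I do not see how to close this gap.
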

\begin{figure}[htb]
	\includegraphics[width=5.5in,height=3in]{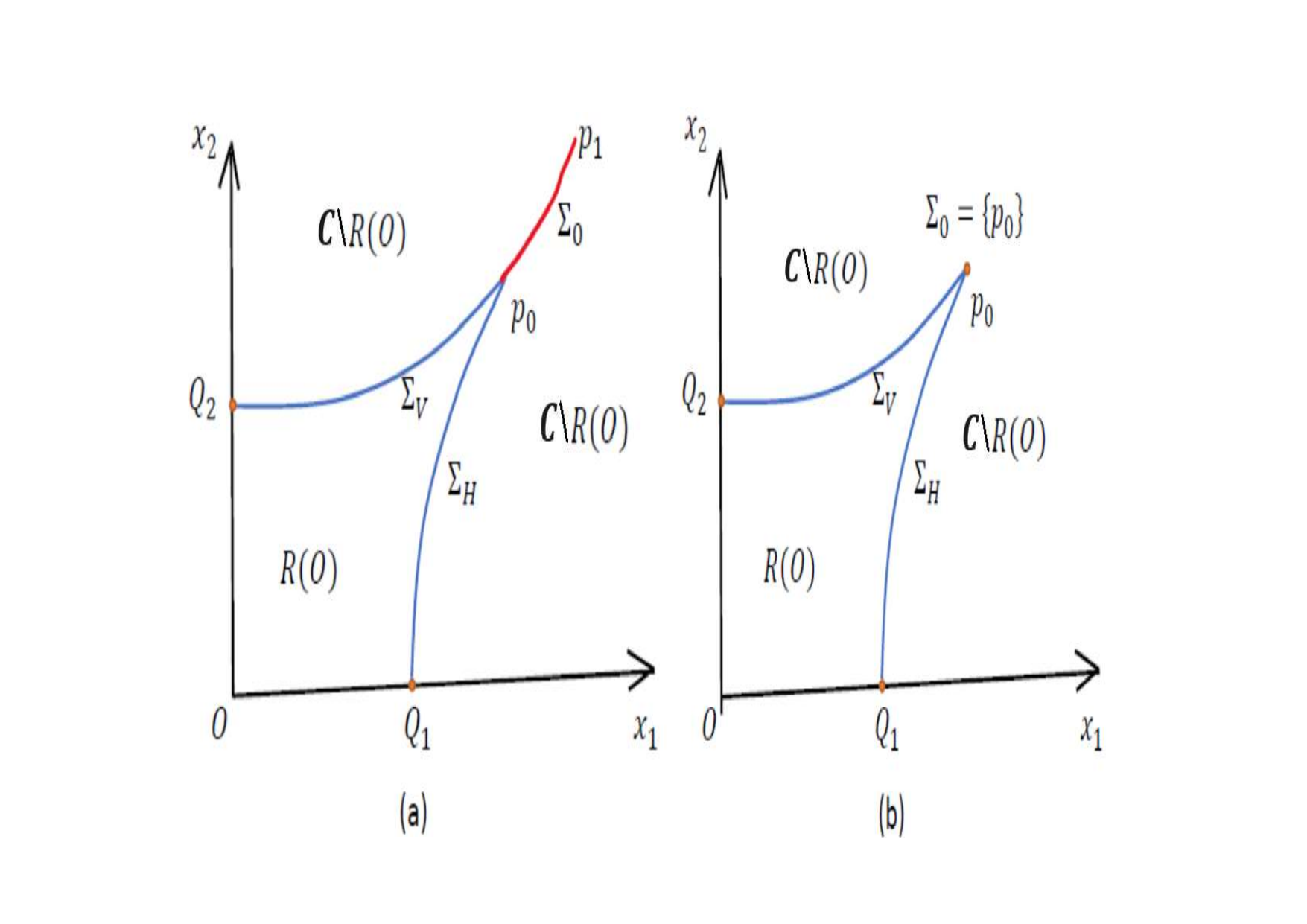}
	\caption{Illustration of $\Sigma = \Sigma_H\cup\Sigma_0\cup\Sigma_V$ when $N=2$: (a) $\Sigma_0$ is a monotone curve in $\ll$, (b) $\Sigma_0 =\{p_0\}$. }
\end{figure}

The proof of Theorem \ref{the3.2} will be given in Section 5. In the rest of this section, we illustrate Theorem \ref{the3.2} by the following example.

\vskip 3 mm \textbf{Example 1} 
	Investigate the dynamics of system (\ref{e1}) with (\ref{e11}) where
	\begin{equation}\label{e21}
		T(x_1, x_2) = \left(\begin{array}{l}
			x_1(1+ b\tan^{-1}(x_2-1 -a(x_1-1) -(x_1-1)^3)) \\
			x_2(1+ b\tan^{-1}(x_1-1 -a(x_2-1) -(x_2-1)^3))
		\end{array}\right),
	\end{equation}
	$a$ and $b$ are positive constants satisfying $0<b< \frac{2}{\pi}$ and other conditions specified later.
\vskip 3 mm 
Then $f_1(x_1, x_2) = 1+ b\tan^{-1}(x_2-1 -a(x_1-1) -(x_1-1)^3) \in (0, 2)$ and $f_2(x_1, x_2) = f_1(x_2, x_1) \in (0, 2)$. The curves $L_1 = \{x\in C: f_1(x)=1\}$ and $L_2 = \{x\in C: f_2(x)=1\}$ are the graphs of the functions
\begin{equation} \label{e22}
	x_2 = 1+ a(x_1-1) + (x_1-1)^3, \quad 	x_1 = 1+ a(x_2-1) + (x_2-1)^3
\end{equation}
in $C$ respectively. As the two curves $L_1$ and $L_2$ and the two functions $f_1$ and $f_2$ are symmetric about the half-line $x_2=x_1\geq 0$, this half-line is a forward invariant manifold and the dynamics of the system restricted to this one-dimensional manifold is described by 
\begin{equation}\label{e23}
	u(n) = g^n(u),\;\; g(u) = u(1+b\tan^{-1}((u-1)(1-a-(u-1)^2))),\;\; u\in \R_+.
\end{equation}
Assume that 
\begin{equation}\label{e24}
	0< b< \min\{\frac{a}{\pi}, \frac{1}{8+2a+ \tan^{-1}(2+a)}\}.
\end{equation}
If $a\geq 1$ then the map $g: \R_+\to\R_+$ has two fixed points: $u = 0$ is a repeller and $u = 1$ is an attractor. If $0 < a <1$ then $g$ has four fixed points: $u=0$ and $u=1$ are repellers and $u = 1-\sqrt{1-a}$ and $u = 1+\sqrt{1-a}$ are attractors. From these we obtain all the fixed points of (\ref{e1}) with (\ref{e21}): $(0, 0), (1, 1), (s_0, 0), (0, s_0)$ and, if $0 <a< 1$, $(1-\sqrt{1-a}, 1-\sqrt{1-a})$ and $(1+\sqrt{1-a}, 1+\sqrt{1-a})$. Here $s_0\in (0, 1)$ is the unique solution of the equation $(s-1)^3 +a(s-1) +1=0$.

Next, we check that $T$ given by (\ref{e21}) satisfies all the conditions of Theorem \ref{the3.2}. As $f_1(0, 0) = f_2(0, 0) = 1+b\tan^{-1}(a)>1$, the condition (A2) holds. We can easily check that $\frac{\partial f_i}{\partial x_i}<0$ and $\frac{\partial f_i}{\partial x_j}>0$ for all $i, j\in\{1, 2\}$ ($i\not= j$) so (A1) holds. We now use Proposition \ref{pro2.3'} to show that $r =(2, 2)\gg\mathbf{0}$ meets all the requirement of Theorem \ref{the3.2}. Indeed, for $u(t) = (t+2, t+2)$ with each fixed $t\geq 0$, we first check that
\begin{equation}\label{e25}
	\max\{T_1(x_1, t+2): 0\leq x_1\leq t+2\} <t+2.
\end{equation}   
For $0\leq x_1\leq 1$, by (\ref{e24}), 
\[
T_1(x_1, t+2) \leq 1 +b\tan^{-1}(t+2+a) < 1 +\frac{\pi}{2}b < 2+t.
\]
For $x_1\geq 1$, $f_1(x_1, t+2)$ is decreasing with $f_1(1, t+2) = 1+b\tan^{-1}(t+1)>1$ and $f_1(t+2, t+2) = 1 +b\tan^{-1}((t+1)(1-a-(t+1)^2)) <1$ so there is a unique $x^*_1 \in (1, t+2)$ such that $f_1(x^*_1, t+2)=1$. Then, for $x_1\in (x^*_1, t+2]$, $T_1(x_1, t+2) <x_1\leq t+2$. So we need only check that $T_1(x_1, t+2) <t+2$ for $1\leq x_1\leq x^*_1$. As
\begin{eqnarray*}
	\frac{\partial T_1(x_1, t+2)}{\partial x_1} &=& 1 +b\tan^{-1}(t+1-a(x_1-1)-(x_1-1)^3) \\
	&& - \frac{bx_1(a+3(x_1-1)^2)}{1+[t+1-a(x_1-1)-(x_1-1)^3]^2}
\end{eqnarray*}
is decreasing, if $x^*_1\leq 2$ then
\[
\frac{\partial T_1}{\partial x_1}(x^*_1, t+2) = 1-bx^*_1(a+3(x^*_1-1)^2) \geq 1- 2b(3+a) >0
\]
by (\ref{e24}). Thus, $T_1(x_1, t+2)$ for $x_1\in [1, x^*_1]$ is increasing and $T_1(x_1, t+2) \leq T_1(x^*_1, t+2) = x^*_1 < t+2$. If $x^*_1 >2$ then $t>a$. By (\ref{e24}) again,
\[
\frac{\partial T_1}{\partial x_1}(2, t+2) = 1+b\tan^{-1}(t-a) -\frac{2b(3+a)}{1+(t-a)^2} > b\tan^{-1}(t-a)>0.
\]
Hence, for $1\leq x_1\leq 2$, 
\[
T_1(x_1, t+2) \leq T_1(2, t+2) = 2(1+b\tan^{-1}(t-a))<2 + t-a< t+2. 
\]
Let $\bar{x}_1 = \frac{t+2}{1+b\tan^{-1}(t-a)}$. As $\bar{x}_1 >2$, for $2\leq x_1< \bar{x}_1$ we have 
\[
T_1(x_1, t+2)< \bar{x}_1f_1(2, t+2) = \bar{x}_1(1+b\tan^{-1}(t-a)) = t+2.
\]
From (\ref{e24}) and $0< \tan^{-1}(t-a) < \frac{\pi}{2}$ we have $a> b\pi > 2b\tan^{-1}(t-a)$, so
\[
a(a+1) > 2(a+1)b\tan^{-1}(t-a),
\]
which is equivalent to
\[
a(a-b\tan^{-1}(t-a)) > (a+2)(1+b\tan^{-1}(t-a)) -2(a+1).
\]
As $a-b\tan^{-1}(t-a)>0$ and $t>a$, we must have
\[
t(a-b\tan^{-1}(t-a)) > (a+2)(1+b\tan^{-1}(t-a)) -2(a+1),
\]
which is the same as
\[
(a+1)t+2(a+1) > (a+2+ t)(1+b\tan^{-1}(t-a)).
\]
Dividing through by $(1+b\tan^{-1}(t-a))$ we have $(a+1)\bar{x}_1 > (a+1) +t+1$, so 
\[
t+1 < (a+1)(\bar{x}_1-1) < a(\bar{x}_1-1) + (\bar{x}_1-1)^3.
\]
This shows that $f_1(\bar{x}_1, t+2) <1 = f_1(x^*_1, t+2)$ so $x^*_1 < \bar{x}_1$. Therefore, for $2\leq x_1\leq x^*_1$, $T_1(x_1, t+2) < t+2$ and (\ref{e25}) follows. As $T_2(t+2, x_2) = T_1(x_2, t+2)$, from (\ref{e25}) it follows that $\max\{T_2(t+2, x_2): 0\leq x_2\leq t+2\}< t+2$. By proposition \ref{pro2.3'}, for $r = (2, 2)$ we have $T([\mathbf{0}, r]) \subset [\mathbf{0}, r)$ and $\omega(x)\subset [\mathbf{0}, r]$ for all $x\in C$.

Finally, we check that $\rho(M(x)) <1$ for $x\in [\mathbf{0}, r]$. Note that $M(x)$ has entries $m_{11}(x) = -\frac{x_1}{f_1(x)}\frac{\partial f_1(x)}{\partial x_1}$, $m_{12}(x) = -\frac{x_1}{f_1(x)}\frac{\partial f_1(x)}{\partial x_2}$, $m_{21}(x) = -\frac{x_2}{f_2(x)}\frac{\partial f_2(x)}{\partial x_1}$ and $m_{22}(x) = -\frac{x_2}{f_2(x)}\frac{\partial f_2(x)}{\partial x_2}$. As 
\[
|m_{i1}(x)| + |m_{i2}(x)| \leq \frac{2(a+4)b}{1-b\tan^{-1}(2+a)}
\]
for $i = 1, 2$, from (\ref{e24}) we have $\|M(x)\| \leq \frac{2(a+4)b}{1-b\tan^{-1}(2+a)} <1$. By Theorem 6.1.3 in \cite{Lan}, $\rho(M(x)) \leq \|M(x)\| <1$. Therefore, (\ref{e1}) with (\ref{e21}) and (\ref{e24}) and $r = (2, 2)$ satisfies all the conditions of Theorem \ref{the3.2}. Then we obtain the following conclusions:
\begin{itemize}
	\item[(i)] If $a\geq 1$ then the fixed point $p_0 = (1, 1)$ is globally asymptotically stable in $\dot{C}$, the fixed points $Q_1 = (s_0, 0)$ and $Q_2 = (0, s_0)$ are saddle points, and the global attractor $\Sigma = \Sigma_H\cup\Sigma_0\cup\Sigma_V$ on $C\setminus\{\mathbf{0}\}$ is shown in Figure 1 (b). 
	\item[(ii)] If $0 < a< 1$ the fixed points $p_0 = (1-\sqrt{1-a}, 1-\sqrt{1-a})$ and $p_1 = (1+\sqrt{1-a}, 1+\sqrt{1-a})$ are attractors but $p_2 = (1, 1)$ is a saddle point; every forward orbit in $\dot{C}$ converges to some $p_i$; the global attractor $\Sigma$ is shown in Figure 1 (a).
\end{itemize}

\section{Proof of Proposition \ref{pro2.5}}
\begin{proof}
	(b) The conclusion follows from Proposition \ref{pro2.4} (c) and (d) immediately.
	
	(a) Since $C_H\setminus \{\mathbf{0}\}$ and $C_V\setminus\{\mathbf{0}\}$ are forward invariant and $\Sigma$ is the global attractor of the system on $C\setminus \{\mathbf{0}\}$, it follows that $\Sigma\cap C_H = \Sigma_H\cap C_H$ ($\Sigma\cap C_V = \Sigma_V\cap C_V$) is the global attractor of the system on $C_H\setminus \{\mathbf{0}\}$ ($C_V\setminus \{\mathbf{0}\}$). 
	
	From (A2) we can choose $\varepsilon>0$ and $\delta>0$ small enough such that 
	\[ \forall x\in S(\mathbf{0}, \varepsilon) =\{y\in C: \|y_H\|<\varepsilon, \|y_V\|<\varepsilon\}, f(x)\gg (1+ \delta)\mathbf{1}
	\] 
	so that $S(\mathbf{0}, \varepsilon) \subset T(S(\mathbf{0}, \varepsilon)) \subset [\mathbf{0}, r)$. As $T$ on $[\mathbf{0}, r]$ is a homeomorphism, we have $T([\mathbf{0}, r]\setminus S(\mathbf{0}, \varepsilon)) \subset [\mathbf{0}, r]\setminus S(\mathbf{0}, \varepsilon)$. By the forward invariance of $C_H\setminus \{\mathbf{0}\}$ and $C_V\setminus\{\mathbf{0}\}$, restricting the set inclusion to $C_H$ and $C_V$ gives
	\begin{eqnarray}
		T(C_H\cap([\mathbf{0}, r]\setminus S(\mathbf{0}, \varepsilon))) &\subset& C_H\cap([\mathbf{0}, r]\setminus S(\mathbf{0}, \varepsilon)), \label{e15} \\ T(C_V\cap([\mathbf{0}, r]\setminus S(\mathbf{0}, \varepsilon))) &\subset& C_V\cap([\mathbf{0}, r]\setminus S(\mathbf{0}, \varepsilon)). \label {e16}
	\end{eqnarray} 
	Now consider the sets $S(C_H, \varepsilon) = \{x\in [\mathbf{0}, r]: \|x_V\|<\varepsilon\}$ and $S(C_V, \varepsilon) = \{x\in [\mathbf{0}, r]: \|x_H\|<\varepsilon\}$. Clearly, $S(C_H, \varepsilon)\cap S(C_V, \varepsilon) = S(\mathbf{0}, \varepsilon)$. We now verify that
	\begin{eqnarray}
		T([\mathbf{0}, r]\setminus S(C_H, \varepsilon)) &\subset& [\mathbf{0}, r]\setminus S(C_H, \varepsilon), \label{e17} \\
		T([\mathbf{0}, r]\setminus S(C_V, \varepsilon)) &\subset& [\mathbf{0}, r]\setminus S(C_V, \varepsilon). \label{e18} 
	\end{eqnarray}
	For each $x\in [\mathbf{0}, r]\setminus S(C_H, \varepsilon)$, we have $\|x_V\|\geq \varepsilon$ and $x_V\in C_V\cap ([\mathbf{0}, r]\setminus S(C_H, \varepsilon)) \subset C_V\cap ([\mathbf{0}, r]\setminus S(\mathbf{0}, \varepsilon))$. By (A1) we have $\|T(x)_V\|\geq \|T(x_V)_V\|$. As $T(x_V)_V = T(x_V)$, from (\ref{e16}) we obtain $\|T(x_V)_V\| \geq \varepsilon$. Thus, $T(x) \in [\mathbf{0}, r]\setminus S(C_H, \varepsilon)$ and (\ref{e17}) follows. Similarly, (\ref{e18}) follows from (A1) and (\ref{e15}). 
	
	For each $x\in S(C_H, \varepsilon)$ and any $j\in V$ with $x_j>0$, by $\|x_V\|<\varepsilon$, $x_V\in S(\mathbf{0}, \varepsilon)$ and (A1) we have $T_j(x) = x_jf_j(x) \geq x_jf_j(x_V) >(1+\delta)x_j$. In parallel, for each $x\in S(C_V, \varepsilon)$ and any $i\in H$ with $x_i>0$, by $\|x_H\|<\varepsilon$, $x_H\in S(\mathbf{0}, \varepsilon)$ and (A1) we have $T_i(x) = x_if_i(x) \geq x_if_i(x_H) >(1+\delta)x_i$. This together with (\ref{e17}) and (\ref{e18}) shows that, for each $x\in (S(C_H, \varepsilon) \cup S(C_V, \varepsilon))\setminus(C_H\cup C_V)$, $T^n(x)\in [\mathbf{0}, r]\setminus (S(C_H, \varepsilon)\cup S(C_H, \varepsilon))$ for some $n>0$. In particular, $\Sigma\cap C_H$ repels the points of $\Sigma_H\setminus C_H$ away and $\Sigma\cap C_V$ repels the points of $\Sigma_V\setminus C_V$ away. So $\Sigma\cap C_H$ and $\Sigma_H\setminus C_H$ are repellers on $\Sigma$.
	
	Combination of (\ref{e17}) and (\ref{e18}) gives
	\[
	T([\mathbf{0}, r]\setminus (S(C_H, \varepsilon)\cup S(C_H, \varepsilon))) \subset [\mathbf{0}, r]\setminus (S(C_H, \varepsilon)\cup S(C_V, \varepsilon)). 
	\]
	It then follows from this and the compactness of the sets that the set
	\begin{equation}\label{e19}
		S_0 = \bigcap_{n=0}^{\infty}T^n([\mathbf{0}, r]\setminus (S(C_H, \varepsilon)\cup S(C_V, \varepsilon)))
	\end{equation}
	is nonempty, compact and invariant subset of $\Sigma$. As $\varepsilon$ can be taken arbitrarily small, (\ref{e19}) shows that $S_0$ attracts the points of any compact set in $C\setminus (C_H\cup C_V)$ uniformly. 
	
	We now show that $S_0 = \Sigma_0$. Clearly, $S_0\cap (C_H\cup C_V) = \emptyset$. If there is a point $x\in S_0\cap (\Sigma_H\cup\Sigma_V)$ then $\alpha(x)\subset (C_H\cup C_V)$, a contradiction to $\alpha(x)\subset S_0$ by the invariance of $S_0$. Thus $S_0\cap (\Sigma_H\cup\Sigma_V) =\emptyset$. By $S_0\subset \Sigma$ we must have $S_0\subset \Sigma_0$. Conversely, each bounded whole orbit is invariant and $\Sigma_0$ consists of the whole orbits that are not in $\Sigma_H\cup\Sigma_V\cup\mathcal{R}(\mathbf{0})$. If there exists $x\in\Sigma_0\cap (S(C_H, \varepsilon)\cup S(C_V, \varepsilon))$, by forward invariance of $[\mathbf{0}, r]\setminus (S(C_H, \varepsilon)\cup S(C_V, \varepsilon))$ we must have $T^{-n}(x) \in \Sigma_0\cap (S(C_H, \varepsilon)\cup S(C_V, \varepsilon))$ for all $n\geq 0$. If $\|x_V\|< \varepsilon$ with $x_j>0$ for some $j\in V$ then from the analysis above we have $(1+\delta)T^{-1}(x)_j<x_j$ and $T^{-n}(x)_j < (1+\delta)^{-n}x_j \to 0$ as $n\to\infty$. Thus, $\|x_V\|< \varepsilon$ implies $\alpha(x) \subset C_H$. Similarly, $\|x_H\|< \varepsilon$ implies $\alpha(x) \subset C_V$. But $\alpha(x) \subset (C_H\cup C_V)$ implies the whole orbit $\gamma(x)$ in $(\Sigma_H\cup\Sigma_V\cup\mathcal{R}(\mathbf{0}))$, a contradiction to $\gamma(x)\subset \Sigma_0$. This shows that $\Sigma_0 \subset ([\mathbf{0}, r]\setminus (S(C_H, \varepsilon)\cup S(C_V, \varepsilon)))$. By the invariance of $\Sigma_0$ and (\ref{e19}), $\Sigma_0\subset S_0$.  
\end{proof}

\section{Proof of Proposition \ref{pro3.1} and Theorem \ref{the3.2}}
We need the following lemma. Let $\ell_1$ ($\ell_2$) be the lower and right (upper and left) boundary of $[\mathbf{0}, r]$ so that $[\mathbf{0}, r]\setminus (\mathbf{0}, r) = \ell_1\cup\ell_2$ and $\ell_1\cap\ell_2 = \{\mathbf{0}, r\}$.
\begin{lemma}\label{lem5.1}
	Assume that (\ref{e1}) with (\ref{e11}) is type-K competitive and $\rho(M(x))<1$ for all $x\in [\mathbf{0}, r]$. Then, for any distinct $T(x), T(y) \in T([\mathbf{0}, r])$ with $T(x)<_K T(y)$, if $\overline{T(x)T(y)}\subset T([\mathbf{0}, r])$ then $T^{-1}(\overline{T(x)T(y)})\subset [\mathbf{0}, r]$ is a monotone curve in $<_K$ with $x <_K y$. Moreover, if the line segment $\overline{T(x)T(y)}$ is non-extendable in $T([\mathbf{0}, r])$ from both ends, then $x\in\ell_2$ and $y\in\ell_1$. If a monotone line in $<_K$ has a touching point $T(x)$ only with $T([\mathbf{0}, r])$ then either $x=r$ or $x=\mathbf{0}$.	
\end{lemma}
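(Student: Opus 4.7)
My plan is to prove the three assertions in order. For the first, I set $g(s) = T(y) + s(T(x)-T(y))$ and $\gamma(s) = T^{-1}(g(s))$ on $[0,1]$, which is well-defined because $\overline{T(x)T(y)}\subset T([\mathbf{0},r])$ and $T$ restricts to a homeomorphism onto $T([\mathbf{0},r])$. Exactly as in the proof of Proposition \ref{pro2.3},
\[
\gamma(s_2) - \gamma(s_1) = \int_{s_1}^{s_2} DT^{-1}(g(s))(T(x) - T(y))\,ds \in -K\setminus\{\mathbf{0}\}
\]
for $0\leq s_1 < s_2\leq 1$, since each $DT^{-1}(g(s))\in\mathcal{M}$ sends $-K\setminus\{\mathbf{0}\}$ into itself. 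This yields a monotone curve $\gamma$ in $<_K$ from $y$ to $x$ with $x <_K y$.

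For the second claim, $T$ is a homeomorphism between the topological disks $[\mathbf{0},r]$ and $T([\mathbf{0},r])$, so it carries $\partial[\mathbf{0},r]$ onto $\partial T([\mathbf{0},r])$; non-extendability at $T(y)$ and $T(x)$ therefore forces $y,x\in\ell_1\cup\ell_2$. To sharpen this to $y\in\ell_1$, I would argue by contradiction: if $y$ is on the interior of the left edge, then $y_1=0$ gives $T_1(x)=T_1(y)=0$, and since $T_2(0,\cdot)$ is strictly increasing (by $1-m_{22}>0$) with $T_2(0,0)=0$, the segment extends toward $\mathbf{0}$, contradiction; if $y$ is on the interior of the top edge, monotonicity of $\gamma$ in $<_K$ together with $\gamma(s)\in[\mathbf{0},r]$ pins $\gamma(s)_2\equiv r_2$, so $\gamma$ sits on the top edge, and then $DT(y)\gamma'(0)=T(x)-T(y)$ combined with $\gamma_1'(0)\leq 0$ and $(DT(y))_{21} = y_2\,\partial f_2/\partial x_1(y)\geq 0$ (from type-K competitiveness) forces $T_2(x)=T_2(y)$, after which an implicit-function analysis of the level curve $\{T_2 = T_2(y)\}$ near $y$ produces an explicit extension inside $[\mathbf{0},r]$, contradiction; the corner $(0,r_2)$ pins both coordinates of $\gamma$, making $\gamma$ constant, absurd. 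A symmetric argument at the $T(x)$ end gives $x\in\ell_2$.

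For the third claim, if $L$ touches $T([\mathbf{0},r])$ only at $T(x)$ then $T(x)\in\partial T([\mathbf{0},r])$ (else $L$ would cut the interior in a nondegenerate chord), so $x\in\ell_1\cup\ell_2$. A direct calculation of $DT(x)\mathbf{e}_i$ at each edge using the sign pattern of $M(x)$ shows that the image tangent along each edge has both components of the same sign (or one zero) and so is never parallel to a $<_K$ direction; hence $x$ must be a corner. At the corners $(r_1,0)$ and $(0,r_2)$ the inward image cone contains a $<_K$ direction (at $(r_1,0)$ it is spanned by $-DT(x)\mathbf{e}_1 = (-a,0)$ and $DT(x)\mathbf{e}_2 = (c,d)$ with $a,d>0$, $c\geq 0$, which contains the $<_K$ direction $(-,+)$), so $L$ enters $T([\mathbf{0},r])$ there and picks up extra intersections; only $x=\mathbf{0}$ and $x=r$ remain, at which the inward image cone sits in the open first or third quadrant and contains no $<_K$ direction.

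The main obstacle will be the degenerate sub-case in the second claim where $\overline{T(x)T(y)}$ is horizontal (or vertical by symmetry): the simple sign count in the tangent identity returns an equality rather than a contradiction, and one has to invoke the implicit function theorem for the level set of $T_2$ (or $T_1$) at $y$, together with the strict positivity $1-m_{ii}(y)>0$ coming from $\rho(M(y))<1$, to produce the extension explicitly and so derive the required contradiction.
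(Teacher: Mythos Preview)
Your first claim matches the paper's argument exactly. For the second and third claims, however, you take a genuinely different route: the paper argues topologically, while you argue differentially via tangent directions and the implicit function theorem.

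The paper's argument for the second claim is this. Suppose $x\notin\ell_2$; since $T(x)\in\partial T([\mathbf{0},r])$ one has $x\in\ell_1\setminus\{\mathbf{0},r\}$, and then the order relation $x<_K y$ together with $y\in\partial[\mathbf{0},r]$ forces $y$ onto the \emph{same} edge of $\ell_1$ as $x$. Hence the monotone preimage curve $T^{-1}(\overline{T(x)T(y)})$ is forced to be the straight segment $\overline{xy}\subset\ell_1$, so $\overline{T(x)T(y)}\subset T(\ell_1)$ and $T([\mathbf{0},r])$ lies entirely on one side of the line. Now slide to a nearby $p<_K x$ on the same edge and draw the parallel chord $\overline{T(p)T(q)}$; its preimage $\overline{pq}$ lies on the same edge of $\ell_1$ and overlaps $\overline{xy}$ as $p\to x$, contradicting that the two parallel chords are disjoint. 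The third claim is then a one-line corollary: a single touching point is a degenerate non-extendable segment, so $x\in\ell_1\cap\ell_2=\{\mathbf{0},r\}$.

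Your differential approach has a concrete gap in the third claim. You assert that along each edge the image tangent ``has both components of the same sign (or one zero) and so is never parallel to a $<_K$ direction.'' But a vector with one zero component \emph{is} a $<_K$ direction: $(1,0)\in K$ and $(0,1)\in -K$, so horizontal and vertical lines are monotone in $<_K$. On the bottom edge $x_2=0$ one has $m_{21}(x)=0$, so $DT(x)\mathbf{e}_1=(f_1(1-m_{11}),0)$ is horizontal; on the left edge the image tangent is vertical; on the top and right edges the image tangent can be horizontal or vertical whenever the off-diagonal $\partial f_i/\partial x_j$ vanishes at that point, which the hypotheses do not exclude. Your corner analysis therefore does not rule out edge points. (The conclusion is still true---on the bottom edge, for instance, the horizontal line through $T(x)$ contains all of $T(\text{bottom edge})$ and hence is not a single touching point---but this requires a separate argument you have not given.) A similar degeneracy infects your second-claim argument on the top edge: when $\partial f_2/\partial x_1(y)=0$ your level-curve extension needs the monotonicity $\partial_1 T_2(\cdot,r_2)\geq 0$ rather than the implicit function theorem alone, and you have not supplied this step.
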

\begin{proof}
	Since $T(x), T(y) \in T([\mathbf{0}, r])$ with $T(x)<_K T(y)$, if $\overline{T(x)T(y)}\subset T([\mathbf{0}, r])$ from the proof of Proposition \ref{pro2.3} we know that $T$ is a homeomorphism from $[\mathbf{0}, r]$ to $T([\mathbf{0}, r])$ and that $T^{-1}(\overline{T(x)T(y)})\subset [\mathbf{0}, r]$ is a monotone curve in $<_K$ with $x <_K y$. 
	
	Now suppose $\overline{T(x)T(y)}$ is non-extendable in $T([\mathbf{0}, r])$ from both ends. We show that $x\in \ell_2$ and $y\in\ell_1$. If this is not true, then either $x\not\in\ell_2$ or $y\not\in\ell_1$, and a contradiction will be derived below in either case. Suppose $x\not\in\ell_2$. Then $T(x)\not\in T(\ell_2)$. As $T$ is a homeomorphism and $\overline{T(x)T(y)}$ is non-extendable in $T([\mathbf{0}, r])$ from both ends, $T(x)$ and $T(y)$ must be on the boundary $(T(\ell_1)\cup T(\ell_2))$ of $T([\mathbf{0}, r])$. So $T(x)\in T(\ell_1\setminus\{\mathbf{0}, r\})$ and $x\in (\ell_1\setminus\{\mathbf{0}, r\})$. Then $x<_K y$ and $y\in [\mathbf{0}, r]$ imply $y\in (\ell_1\setminus\{\mathbf{0}, r\})$. So either both $x, y$ are on the $x_1$-axis with $0< x_1 < y_1\leq r_1, x_2=y_2 =0$ or both are on the vertical line with $x_1 =y_1= r_1, r_2>x_2 > y_2 \geq 0$. Thus, $\overline{xy}$ is the only one monotone curve in $<_K$ from $x$ to $y$ and $T^{-1}(\overline{T(x)T(y)}) = \overline{xy} \subset (\ell_1\setminus\{\mathbf{0}, r\})$. Hence, $\overline{T(x)T(y)} \subset T(\ell_1\setminus\{\mathbf{0}, r\})$. As $\overline{T(x)T(y)}$ is non-extendable in $T([\mathbf{0}, r])$, the region $T([\mathbf{0}, r])\setminus \overline{T(x)T(y)}$ is completely on one side of the straight line (see Figure 2). Now take a point $p\in \ell_1$ close to $x$ such that $p<_K x$. Then $T(p)\in (T(\ell_1)\setminus \overline{T(x)T(y)})$. The line through $T(p)$ parallel to $\overline{T(x)T(y)}$ intersects the region. So there is some $T(q)\in T(\ell_1\cup\ell_2)$ such that $\overline{T(p)T(q)}\subset T([\mathbf{0}, r])$ and $\overline{T(p)T(q)}$ is parallel to $\overline{T(x)T(y)}$ with $T(p) <_K T(q)$. Then $p<_K q$. This, together with $p\in\ell_1$ and $q\in [\mathbf{0}, r]$ implies that $q\in \ell_1$ and $T^{-1}(\overline{T(p)T(q)}) = \overline{pq}$. By the choice of $p$ we see that $p, x, y, q$ are on the same straight line and $q\to y$ if $p\to x$. Thus, $\overline{xy}\cap\overline{pq}\not=\emptyset$, which is equivalent to $\overline{T(x)T(y)}\cap\overline{T(p)T(q)}\not=\emptyset$, when $p$ is close to $x$. This contradicts $\overline{T(p)T(q)} \subset (T([\mathbf{0}, r])\setminus \overline{T(x)T(y)})$ on one side of $\overline{T(x)T(y)}$. Therefore, we must have $x\in\ell_2$. If $y\not\in\ell_1$, by the same reasoning as above we would have $\overline{xy} = T^{-1}(\overline{T(x)T(y)}) \subset \ell_2\setminus\{\mathbf{0}, r\}$ and derive a contradiction. 
	
	\begin{figure}[htb]
		\includegraphics[width=5.5in,height=4in]{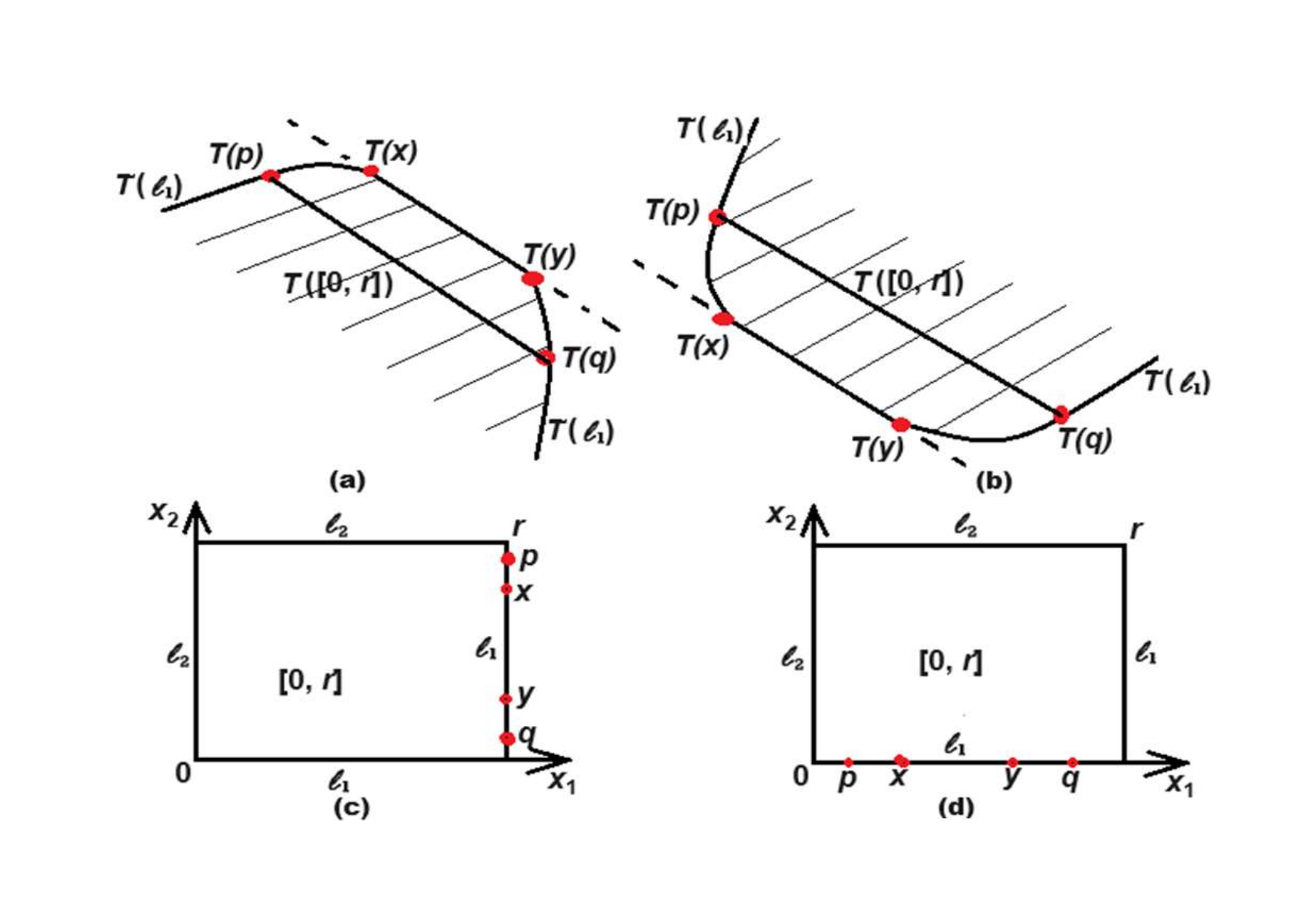}
		\caption{Illustration of $\overline{xy}$, $\overline{pq}$, $\overline{T(x)T(y)}$ and $\overline{T(p)T(q)}$: (a) \& (b) The region $T([\mathbf{0}, r])\setminus \overline{T(x)T(y)}$ is completely on one side of $\overline{T(x)T(y)}$, (c) \& (d) $\overline{xy}\subset \overline{pq}\subset\ell_1$. }
	\end{figure}
	A touching point $T(x)$ of a monotone line in $<_K$ with $T([\mathbf{0}, r])$ can be viewed as a degenerated non-extendable line segment. So $x\in (\ell_1\cap\ell_2) = \{\mathbf{0}, r\}$.
\end{proof}

With the help of Lemma \ref{lem5.1} we are now able to prove Proposition \ref{pro3.1}.
\begin{proof}[Proof of Proposition \ref{pro3.1}]
	For any distinct points $T(x), T(y)\in T([\mathbf{0}, r])$ with $T(x) <_K T(y)$, we show that $\overline{T(x)T(y)} \subset T([\mathbf{0}, r])$. Suppose this is not true. Then there is a point $w\in (\overline{T(x)T(y)} \setminus T([\mathbf{0}, r]))$ and $T(x) <_K w <_K T(y)$. Then there are $T(u), T(v), T(p), T(q)\in T([\mathbf{0}, r])$ such that these points and $T(x), T(y), w$ are on the same straight line satisfying 
	\[
	T(u) \leq_K T(x) \leq_K T(v) <_K w <_K T(p) \leq_K T(y) \leq_K T(q)
	\]
	and both $\overline{T(u)T(v)}$ and $\overline{T(p)T(q)}$ are non-extendable line segments (including the possibility of degenerated line segment consisting of a single point) in $T([\mathbf{0}, r])$. By Lemma \ref{lem5.1}, $u, p\in\ell_2$ and $v, q\in\ell_1$ with $u\leq_K v$ and $p\leq_K q$. Then $T^{-1}(\overline{T(u)T(v)})$ is a monotone curve in $<_K$ from $u$ to $v$ (including a degenerated case of a single point $u=v$) and $T^{-1}(\overline{T(p)T(q)})$ is a monotone curve in $<_K$ from $p$ to $q$ (including a degenerated case of a single point $p=q$). As $\overline{T(u)T(v)}\cap\overline{T(p)T(q)} =\emptyset$ and $T$ is a homeomorphism, we have $T^{-1}(\overline{T(u)T(v)})\cap T^{-1}(\overline{T(p)T(q)}) =\emptyset$. Then these two monotone curves together with the section $\ell_4$ of $\ell_2$ between $u$ and $p$ and the section $\ell_3$ of $\ell_1$ between $v$ and $q$ form the boundary of a region $R\subset [\mathbf{0}, r]$ (see Figure 3). Then $T(R)\subset T([\mathbf{0}, r])$ is a region bounded by $\overline{T(u)T(v)}$, $T(\ell_3)$, $\overline{T(p)T(q)}$ and $T(\ell_4)$. Now take a point $a\in (\ell_4\setminus\{u, p\})$ close to $u$, so $T(a) \in (T(\ell_4)\setminus\{T(u), T(p)\})$ close to $T(u)$. As $T(u), T(v), T(p), T(q)$ are on the same line but $T(a)$ is not, the line passing through $T(a)$ parallel to $\overline{T(x)T(y)}$ must intersect $T(\ell_4)$ at $T(b)>_K T(a)$ for some $b\in\ell_4$ between $a$ and $p$ such that $\overline{T(a)T(b)}$ is a non-extendable line segment in $T([\mathbf{0}, r])$. By Lemma \ref{lem5.1}, $a <_K b$ with $a\in \ell_2$ and $b\in\ell_1$. But $a, b$ are between $u$ and $p$ on $\ell_4$, so $a, b\in (\ell_2\setminus\{\mathbf{0}, r\})$ and $b\not\in \ell_1$. This contradiction shows that $\overline{T(x)T(y)} \subset T([\mathbf{0}, r])$. Hence, $T([\mathbf{0}, r])$ is type-K convex.
	\begin{figure}[htb]
		\includegraphics[width=5.5in,height=3.5in]{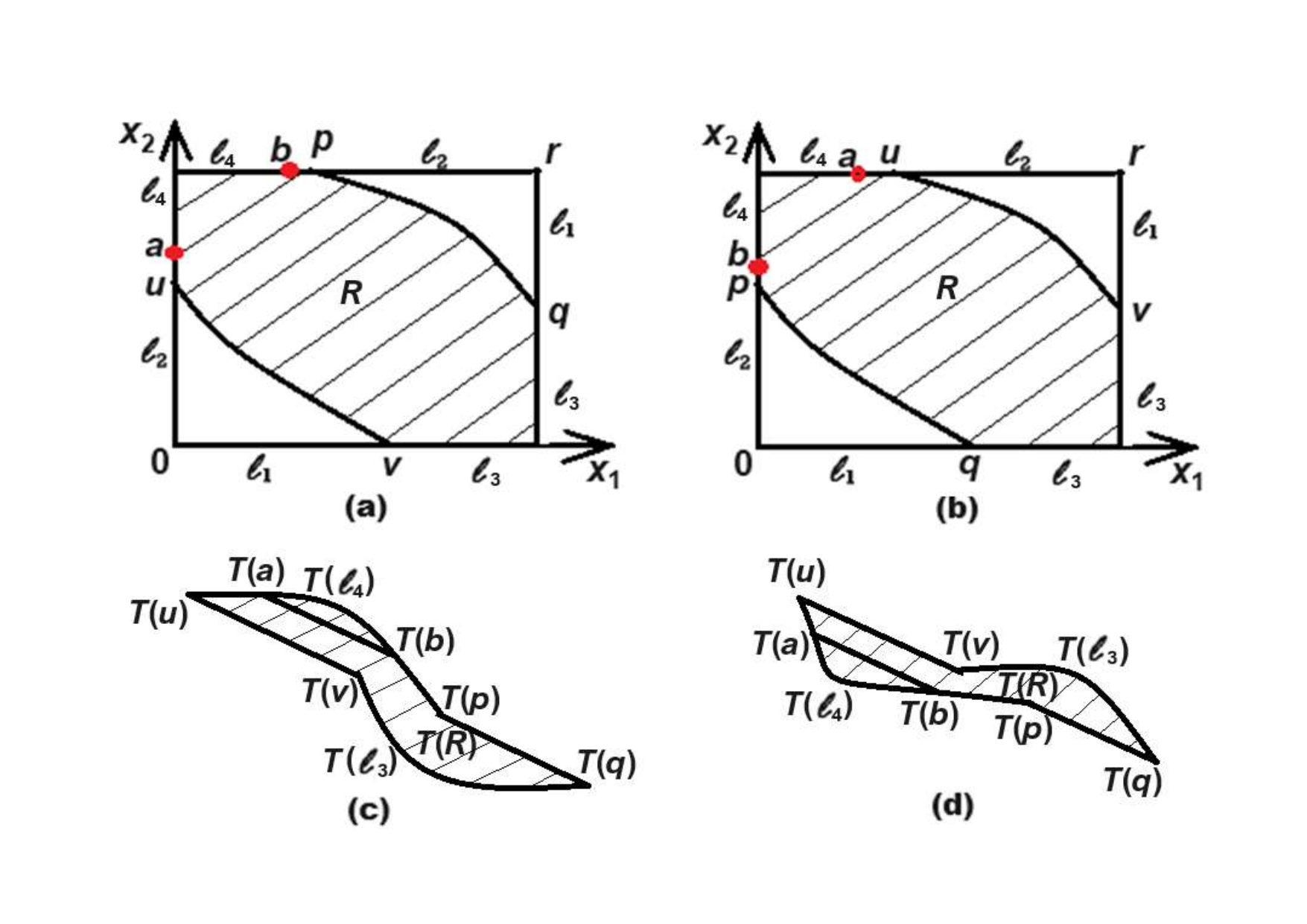}
		\caption{Illustration of $\overline{uv}$, $\overline{pq}$, $\overline{T(u)T(v)}$ and $\overline{T(p)T(q)}$: (a) \& (b) The region $R$ bounded by $T^{-1}(\overline{T(u)T(v)}), T^{-1}(\overline{T(p)T(q)}), \ell_3$ and $\ell_4$. (c) \& (d) The region $T(R)$ bounded by $\overline{uv}, T(\ell_3), \overline{pq}$ and $T(\ell_4)$. }
	\end{figure}
\end{proof}

\begin{proof}[Proof of Theorem \ref{the3.2}]
	(a) From $T([\mathbf{0}, r])\subset [\mathbf{0}, r)$ we see that $T_1(r_1, 0) = r_1f_1(r_1, 0)< r_1$ and $T_2(0, r_2) = r_2f_2(0, r_2) < r_2$, so $f_1(r_1, 0) <1$ and $f_2(0, r_2) <1$. By (A2) $f(\mathbf{0})\gg \mathbf{1}$. Since $f_1(t, 0), f_2(0, t)$ are decreasing in $t$ by (A1), there is a unique point $Q_i$ on the positive half $x_i$-axis such that $f_i(Q_i) =1$, i.e. $T(Q_i) = Q_i$, for $i = 1, 2$. Let $Q_1 =(q_1, 0)$ and $Q_2 =(0, q_2)$.
	
	From (\ref{e6}) and (\ref{e7}) we see that $DT(Q_1)$ has eigenvalues $1+q_1\frac{\partial f_1}{\partial x_1}(Q_1)$ and $f_2(Q_1)$. As $\rho(M(x)) <1$, from (\ref{e12}) $-q_1\frac{\partial f_1}{\partial x_1}(Q_1)< 2$ so $|1+q_1\frac{\partial f_1}{\partial x_1}(Q_1)| <1$. As $f_2(s, 0)$ is nondecreasing in $s$ and $f(\mathbf{0})\gg\mathbf{1}$, we have $f_2(Q_1)>1$. This shows that $Q_1$ is a saddle point. Clearly, $W^s(Q_1)$ is the positive half $x_1$-axis and $W^u(Q_1)\subset\dot{C}$. Similarly, $Q_2$ is a saddle point, $W^s(Q_2)$ is the positive half $x_2$-axis and $W^u(Q_2)\subset \dot{C}$.
	
	(b) Now consider the sets defined by
	\begin{equation} \label{e13}
		\ell_1 = \{x\in C: f_1(x) =1\}, \quad \ell_2 = \{x\in C: f_2(x) =1\}.	
	\end{equation}
	Then the assumption (A1) and implicit function theorem imply the existence of $C^1$ positive nondecreasing functions $g_i: \R_+\to\R_+$ such that $\ell_i$ is the graph of $g_i$ for $i=1, 2$,  $x_1 = g_1(x_2)$ and $x_2 = g_2(x_1)$ satisfy $f_1(g_1(x_2), x_2)\equiv 1$, $f_2(x_1, g_2(x_1))\equiv 1$, and 
	\[
	g_1'(x_2) = - \frac{\partial f_1}{\partial x_2}/\frac{\partial f_1}{\partial x_1} \geq 0,\quad g_2'(x_1) = - \frac{\partial f_2}{\partial x_1}/\frac{\partial f_2}{\partial x_2} \geq 0.
	\]
	The curve $\ell_1$ ($\ell_2$) divides $C$ into two regions: one containing the positive half $x_2$-axis ($x_1$-axis) with $f_1(x)>1$ ($f_2(x)>1$) and the other with $f_1(x) <1$ ($f_2(x) <1$). By the assumptions, we need only consider the region $[\mathbf{0}, r]$ and $\ell_1$ and $\ell_2$ divide it into at least four regions: $R_1$ containing $\mathbf{0}$ with $f(x)\gg \mathbf{1}$, $R_2$ containing $r$ with $f(x)\ll\mathbf{1}$, $R_3$ containing part of $x_1$-axis with $f(x)\ll_K\mathbf{1}$, $R_4$ containing part of $x_2$-axis with $f(x)\gg_K\mathbf{1}$, and possibly more. Then we see the existence of intersection points $p_0, p_1\in (\mathbf{0}, r)$ of $\ell_1$ and $\ell_2$, $p_0$ on the common boundary of $R_1, R_3, R_4$ and $p_1$ on the common boundary of $R_2, R_3, R_4$. The intersection points of $\ell_1$ and $\ell_2$ are the fixed points of $T$ in $(\mathbf{0}, r)$. Thus, $T$ has at least one fixed point in $(\mathbf{0}, r)$. If $p_0=p_1$ then $p_0$ is the unique fixed point in $(\mathbf{0}, r)$ and $R_i, 1\leq i\leq 4$, are the only four regions. If $p_0\not= p_1$ and the sections of $\ell_1$ and $\ell_2$ between $p_0$ and $p_1$ are not completely coincide with each other, then $T$ may have other fixed points between $p_0$ and $p_1$. By the nondecreasing nature of $g_1$ and $g_2$, any two distinct fixed points in $(\mathbf{0}, r)$ are related by $\ll$ and $p_0\ll p\ll p_1$ for any other fixed point $p\in (\mathbf{0}, r)$. In this case each $R$ of the other regions is bounded by $\ell_1$ and $\ell_2$ with two fixed points on the boundary and $f(x)\ll \mathbf{1}$ in the region or $f(x)\gg \mathbf{1}$ in the region (see Figure 4).
	\begin{figure}[htb]
		\includegraphics[width=5.5in,height=3in]{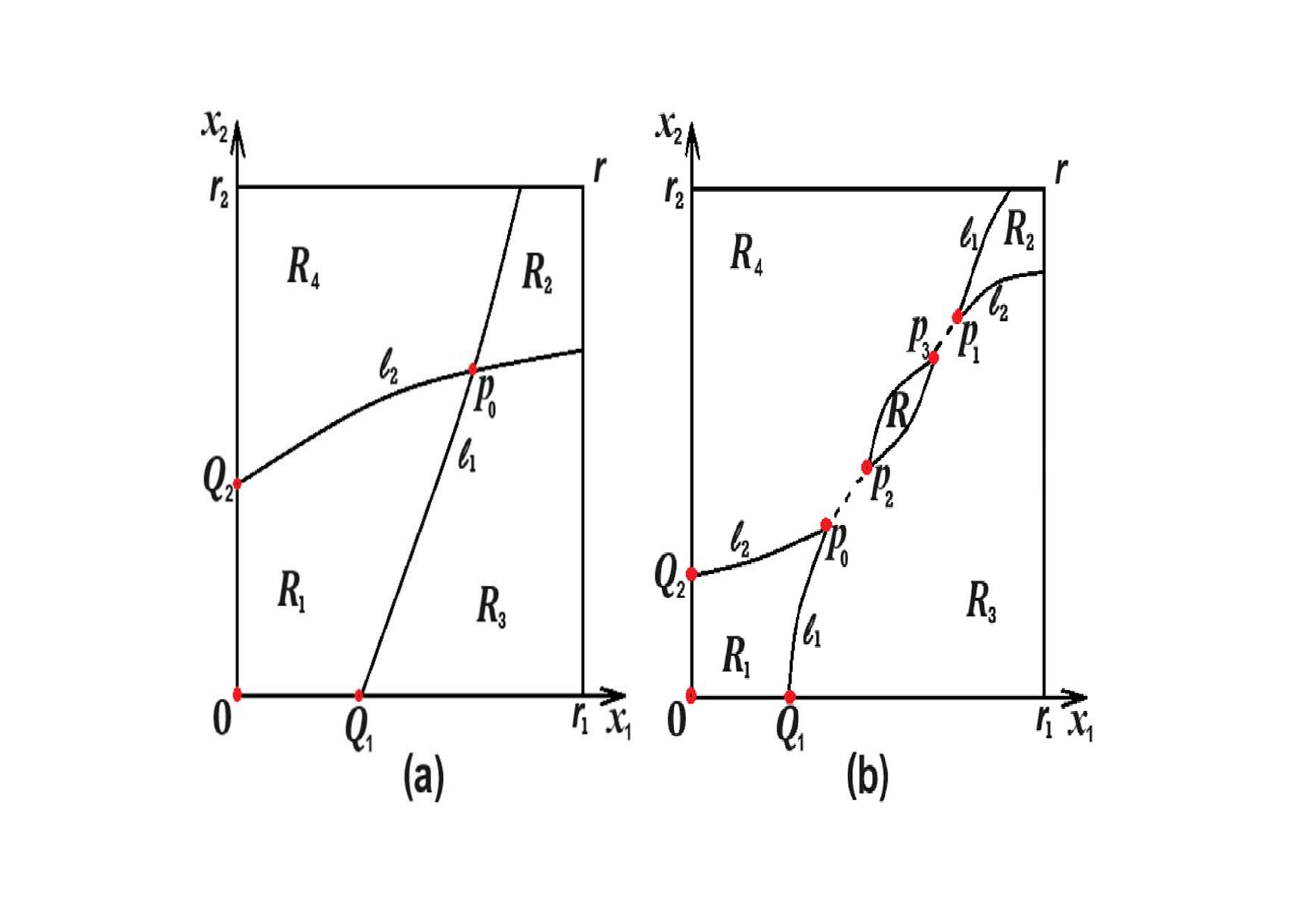}
		\caption{Illustration of the regions divided by $\ell_1$ and $\ell_2$: (a) Only four regions $R_1$--$R_4$, (b) Five or more regions. }
	\end{figure}
	
	We next prove that each of $\overline{R_1}, \overline{R_2}$ and $\overline{R}$, if any, is forward invariant. For any $x\in (\overline{R_1}\setminus\{p_0\})\cap\dot{C}$ we have $T(x) > x$. If $T(x)\in R_3$ then $T^2(x) \ll_K T(x)$. From Proposition \ref{pro3.1} and Remark 1 we know that $T$ is weakly type-K retrotone. Thus, $T(x) = T^{-1}(T^2(x)) \ll_K T^{-1}(T(x)) = x$, a contradiction to $T(x) > x$. If $T(x)\in R_4$ then $T^2(x) \gg_K T(x)$. As $T$ is weakly type-K retrotone, $T(x) = T^{-1}(T^2(x)) \gg_K T^{-1}(T(x)) = x$, a contradiction to $T(x) > x$. If $T(x)$ is in $\overline{R_2}$ or some $\overline{R} $, then the homeomorphism $T$ maps the line segment $\overline{\mathbf{0}x} \subset (\overline{R_1}\setminus\{p_0\})$ onto a curve from $\mathbf{0}$ to $T(x)$. Since this curve can pass through neither $R_3$ nor $R_4$, it must pass through $p_0$, a contradiction to $T^{-1}(p_0) = p_0\not\in \overline{\mathbf{0}x}$. All these contradictions show that $T(x)\in\overline{R_1}$. If $x\in R_1$ is on the positive half $x_1$-axis then $T(x)$ is on the same half axis. As $T$ is a homeomorphism, we must have $T(x)<Q_1$, so $T(x)\in R_1$. Similarly, if $x\in R_1$ is on the positive half $x_2$-axis, then so is $T(x)$ and $T(x)\in R_1$. Therefore, $T(\overline{R_1}) \subset \overline{R_1}$. 
	
	For any $x\in (\overline{R_2}\setminus\{p_1\})$ we have $T(x)< x$. By the same reasoning as above, $T(x)\not\in (R_3\cup R_4)$. For the point $(a, r_2)\in \ell_1\cap ([\mathbf{0}, r]\setminus [\mathbf{0}, r))$, it is in $\overline{R_2}$ so $T(a, r_2)< (a, r_2)$ and $T(a, r_2)\not\in(R_3\cup R_4)$. If $T(a, r_2)\not\in\overline{R_2}$ then it must satisfy $T(a, r_2)< p_1=T(p_1)<(a, r_2)$. But $(a, r_2)\in \ell_1$ implies $T_1(a, r_2) = a$. Then combination of $T(a, r_2)< p_1=T(p_1)<(a, r_2)$ and $a\leq T_1(p_1)\leq a$ gives $T_1(a, r_2) = T_1(p_1) = a$ and $T_2(a, r_2) < T_2(p_1) <r_2$. Thus, $(a, r_2) <_K p_1 = T(p_1) <_K T(a, r_2)$. By weakly type-K retrotone property of $T$, $p_1 <_K(a, r_2)$, a contradiction to $(a, r_2) <_K p_1$. This shows that $T(a, r_2)\in\overline{R_2}$. If $T(x)\not\in\overline{R_2}$ for some $x\in(\overline{R2}\setminus\{p_1\})$, then $T(x)<p_1$. Now taking a simple curve $L\subset (\overline{R_2}\setminus\{p_1\})$ from $(a, r_2)$ to $x$, $T(L)$ is a simple curve from $T(a, r_2)\in\overline{R_2}$ to $T(x)<p_1$. As the curve can pass through neither $R_3$ nor $R_4$, it must pass through $p_1$, a contradiction to $T^{-1}(p_1) = p_1\not\in L$. Therefore, $T(\overline{R_2})\subset \overline{R_2}$.
	
	Suppose a region $R$ exists with two fixed points $p_2$ and $p_3\gg p_2$ on its boundary. Then $T(x) <x$ for all $x\in(\overline{R}\setminus\{p_2, p_3\})$ ($T(x) >x$ for all $x\in(\overline{R}\setminus\{p_2, p_3\})$), and $T(x)\not \in(R_3\cup R_4)$ by the same reasoning as that used in $\overline{R_1}$ and $\overline{R_2}$. If $T(x)\not\in\overline{R}$ for some $x\in (\overline{R}\setminus\{p_2, p_3\})$, then $T(x)<p_2$ ($T(x)>p_3$). Choosing a simple curve $L\subset (\overline{R}\setminus\{p_2, p_3\})$ from $p_3$ ($p_2$) to $x$, $T(L)$ is a simple curve from $p_3$ ($p_2$) to $T(x)$ passing through $p_2$ ($p_3$), a contradiction to $T^{-1}(p_2) = p_2\not\in L$ ($T^{-1}(p_3) = p_3\not\in L$). Hence, $T(\overline{R}) \subset \overline{R}$.
	
	Since $T(x)\ll_K x$ for all $x\in (R_3\cap\dot{C})$ and $T(x)\gg_K x$ for all $x\in (R_4\cap\dot{C})$, by the weakly type-K retrotone property of $T$, $T(R_3)\cap R_4 =\emptyset$ and $T(R_4)\cap R_3 =\emptyset$. From this it follows that, for each $x\in (R_3\cup R_4)\cap\dot{C}$, either $T^n(x)$ is in $(\overline{R_1}\cup\overline{R_2})$ or some $\overline{R}$ for some $n>0$ or it stays in $(R_3\cup R_4)\cap\dot{C}$ for all $n\geq 0$.
	
	In summary, we conclude that for each $x\in\dot{C}$, the sequence $\{T^n(x)\}$ is eventually monotone in $\ll_K$ or $<$ so it converges to a fixed point. If $p_0$ is the unique fixed point in $\dot{C}$ then $\lim_{n\to\infty}T^n(x) = p_0$ for all $x\in\dot{C}$. The stability of $p_0$ follows from the dynamics on the four regions $R_1$--$R_4$.
	
	(c) Since $\mathcal{R}(\mathbf{0})$ is open in $C$, from Proposition \ref{pro2.4} (a) we have $\overline{\mathcal{R}(\mathbf{0})}\setminus \mathcal{R}(\mathbf{0})\subset \Sigma$. Consider the line segment $\ell(s)=\{q_1^{-1}x_1+q_2^{-1}x_2 = s: x\in C\}$ for each $s>0$. Then each pair of distinct points on $\ell(s)$ are related by $\ll_K$ and $\ell(1) = \overline{Q_1Q_2}$. From (A1), part (b) above and Proposition \ref{pro2.4} (b) we see the existence of $s_0>1$ such that 
	\[
	\forall s\geq s_0, \ell(s)\cap \mathcal{R}(\mathbf{0}) = \emptyset; \quad \forall s\in(0, s_0), \ell(s)\cap \mathcal{R}(\mathbf{0}) \not= \emptyset.
	\] 
	Moreover, for each $s\in [1, s_0)$, there are two distinct points $A(s), B(s)\in\overline{\mathcal{R}(\mathbf{0})}$ with $A(s)\ll_K B(s)$ such that 
	\[
	\overline{A(s)B(s)}\setminus\{A(s), B(s)\} = \ell(s)\cap \mathcal{R}(\mathbf{0}).
	\] 
	By Proposition \ref{pro2.4} (c), $A(s)\in \Sigma_V$ and $B(s)\in \Sigma_H$ for all $s\in [1, s_0)$. We first show that $L_H=\{B(s): 1\leq s < s_0\}$ and $L_V = \{A(s): 1\leq s < s_0\}$ are monotone curves in $<$. Note that any two distinct points in $\R^2$ are related by $\ll_K$ or $<$. For any $1\leq s_1<s_2<s_0$, the line segment $\ell(s_1)$ is below $\ell(s_2)$ so we cannot have $B(s_2)< B(s_1)$. By Proposition \ref{pro2.4} (c) again, $B(s_1)$ and $B(s_2)$ are not related by $\ll_K$ so we must have $B(s_1)<B(s_2)$. By the same reasoning as above, we also have $A(s_1)<A(s_2)$. Thus, $L_H$ and $L_V$ are monotone. For any $\bar{s}\in [1, s_0)$, since the line segment $\ell(s)$ is parallel to $\ell(\bar{s})$, as $s\to \bar{s}+$ the endpoint $A(s)$ ($B(s)$) converges to a point $A'\in \ell(\bar{s})$ ($B'\in \ell(\bar{s})$). Thus, $A'$ and $A(\bar{s})$ ($B'$ and $B(\bar{s})$) can only be related by $\ll_K$ or $=$. But since $A(\bar{s})<A(s)$ ($B(\bar{s})<B(s)$) for all $s\in(\bar{s}, s_0)$, we should have $A(\bar{s}) \leq A'$ ($B(\bar{s}) \leq B'$), which rules out the possibility of $A'$ and $A(\bar{s})$ ($B'$ and $B(\bar{s})$) related by $\ll_K$. Thus, $A' = A(\bar{s})$ ($B' = B(\bar{s})$). By the same reasoning, we also have $\lim_{s\to \bar{s}-}A(s) = A(\bar{s})$ ($\lim_{s\to \bar{s}-}B(s) = B(\bar{s})$) if $\bar{s}>1$. Therefore, $A(s)$ and $B(s)$ are continuous, so $L_H$ and $L_V$ are monotone curves in $<$.
	
	Now let $\tilde{A} = \lim_{s\to s_0-}A(s)$ and $\tilde{B} = \lim_{s\to s_0-}B(s)$. Then $\tilde{A}, \tilde{B}\in\ell(s_0)$. It follows from this and $A(s)\ll_K B(s)$ that either $\tilde{A}\ll_K\tilde{B}$ or $\tilde{A}=\tilde{B}$. Suppose $\tilde{A}\ll_K\tilde{B}$. Since $\overline{\tilde{A}\tilde{B}}$ is on the north-east boundary of $\mathcal{R}(\mathbf{0})$, for any point $x\in (\overline{\tilde{A}\tilde{B}}\setminus\{\tilde{A}, \tilde{B}\})$, the horizontal and vertical lines through $x$ both intersect with $\mathcal{R}(\mathbf{0})$. So there are $y, z\in\mathcal{R}(\mathbf{0})$ satisfying $y<_K x<_K z$. Then the weakly type-K retrotone property of $T$ implies 
	\[
	0 = \lim_{n\to\infty}T^{-n}(y) \leq_K \lim_{n\to\infty}T^{-n}(x) \leq_K \lim_{n\to\infty}T^{-n}(z) =0,
	\]
	so $x\in\mathcal{R}(\mathbf{0})$, a contradiction to $\ell(s_0)\cap \mathcal{R}(\mathbf{0}) = \emptyset$. This shows that $\tilde{A}=\tilde{B}$. Then $\overline{\mathcal{R}(\mathbf{0})} = L_H\cup L_V\cup\{\tilde{A}\}\cup \mathcal{R}(\mathbf{0})$. From the proof of part (b) above we see that the region $R_1$ is forward invariant containing $\overline{\mathcal{R}(\mathbf{0})}$. Since $\lim_{n\to\infty}T^n(x) = p_0$ for all $x\in (R_1\cap\dot{C})$, we must have $p_0\in (L_H\cup L_V\cup\{\tilde{A}\})$. But as $p_0\gg \mathbf{0}$, $p_0\not\in(L_H\cup L_V)\subset (\Sigma_H\cup\Sigma_V)$. Thus, $\tilde{A} = p_0\in\Sigma_0$. 
	
	Note that $\Sigma$ consists of bounded whole orbits outside of $\mathcal{R}(\mathbf{0})$. For each $x\in [\mathbf{0}, r]\setminus ([p_0, p_1]\cup \overline{\mathcal{R}(\mathbf{0})})$, $T^{-n}(x)$ either does not exist or is outside of $[\mathbf{0}, r]$ for some integer $n>0$. Thus, $\Sigma \cap ([\mathbf{0}, r]\setminus ([p_0, p_1]\cup \overline{\mathcal{R}(\mathbf{0})}))= \emptyset$ and $\Sigma \subset ([p_0, p_1]\cup \overline{\mathcal{R}(\mathbf{0})}\setminus \mathcal{R}(\mathbf{0}))$. 
	
	If $p_0=p_1$ then 
	\[
	\Sigma = \overline{\mathcal{R}(\mathbf{0})}\setminus \mathcal{R}(\mathbf{0}),\quad \Sigma_H = L_H, \quad \Sigma_V = L_V, \quad\Sigma_0 = \{p_0\}.
	\]
	If $p_0\ll p_1$, from the analysis in (b) above we see that $T([p_0, p_1])\subset [p_0, p_1]$ and
	\[
	\cdots \subset T^{n+1}([p_0, p_1])\subset T^n([p_0, p_1])\subset \cdots \subset [p_0, p_1]. 
	\] 
	As $[p_0, p_1]$ is compact and $T$ maps compact set to compact set, $\cap_{n=0}^{\infty}T^n([p_0, p_1])$ is nonempty compact and invariant. Thus, $\cap_{n=0}^{\infty}T^n([p_0, p_1])\subset \Sigma_0$. But since $\cap_{n=0}^{\infty}T^n([p_0, p_1])$ is the largest invariant subset in $[p_0, p_1]$ and $\Sigma_0$ is an invariant subset of $[p_0, p_1]$, we must have $\Sigma_0\subset \cap_{n=0}^{\infty}T^n([p_0, p_1])$. Hence, $\cap_{n=0}^{\infty}T^n([p_0, p_1]) = \Sigma_0$, $L_H=\Sigma_H$ and $L_V=\Sigma_V$. 
	
	We now show that $\Sigma_0$ is a monotone curve from $p_0$ to $p_1$. Indeed, let $L_1$ ($L_2$) be the upper and left (lower and right) sides of the rectangle $[p_0, p_1]$. As $T$ is a homeomorphism, for each $n\geq 0$, $T^n(L_1), T^n(L_2)$ are simple curves from $p_0$ to $p_1$ as the boundary of $T^n([p_0, p_1])$. There is $s_1>s_0$ such that $\ell(s_1)\cap (\cap_{n=0}^{\infty}T^n([p_0, p_1])) =\{p_1\}$ and, for each $s\in (s_0, s_1)$, there are points $a_n(s)\in T^n(L_1), b_n(s)\in T^n(L_2)$ such that $\overline{a_n(s)b_n(s)} \subset T^n([p_0, p_1])$. These line segments on $\ell(s)$ are compact and 
	\[
	\cdots \subset \overline{a_{n+1}(s)b_{n+1}(s)}\subset \overline{a_n(s)b_n(s)}\subset \cdots \subset \overline{a_0(s)b_0(s)}. 
	\]
	Thus, $\cap_{n=0}^{\infty}\overline{a_n(s)b_n(s)}$ is a nonempty compact subset of $\Sigma_0$. If there are two distinct points in $\cap_{n=0}^{\infty}\overline{a_n(s)b_n(s)}$, by Proposition \ref{pro2.4} (c) one would be in $\Sigma_H$ and the other in $\Sigma_V$. This is impossible as these points are in $\Sigma_0$. Hence, there is a point $c(s)\in\Sigma_0$ such that $\cap_{n=0}^{\infty}\overline{a_n(s)b_n(s)}= \{c(s)\}$, i.e. each line $\ell(s)$ intersects $\Sigma_0$ at $c(s)$. For any $s_0\leq s<s'\leq s_1$, since $\ell(s)$ is below $\ell(s')$, we cannot have $c(s')\leq c(s)$. If either $c(s)<_Kc(s')$ or $c(s')<_Kc(s)$, from Proposition \ref{pro2.4} (c) we obtain either $\lim_{n\to\infty}[T^{-n}(c(s))]_1 = 0$ or $\lim_{n\to\infty}[T^{-n}(c(s))]_2 = 0$, a contradiction to the invariance of $\Sigma_0\subset [p_0, p_1]$. Therefore, $c(s)\ll c(s')$. The continuity of $c(s)$ follows from the same reasoning as for the continuity of $A(s)$ and $B(s)$ above. This shows that $\Sigma_0$ is a monotone curve in $\ll$ from $p_0$ to $p_1$. 
	
	It is now clear from above that $\Sigma_H = W^u(Q_1)\cup\{Q_1\}$ and $\Sigma_V = W^u(Q_2)\cup\{Q_2\}$. 
	
	Finally, if $\frac{\partial f_1(x)}{\partial x_2} >0$ and $\frac{\partial f_2(x)}{\partial x_1} >0$ for $x\in [\mathbf{0}, r]$, by Proposition \ref{pro3.1} and Proposition \ref{pro2.3} $T$ is type-K retrotone. Then $\ell(s_1)$ below $\ell(s_2)$ implies $B(s_2)\not\ll B(s_1)$ and, by Proposition \ref{pro2.4} (d), $B(s_1)$ and $B(s_2)$ are not related by $<_K$. So we must have $B(s_1)\ll B(s_2)$. Hence, $L_H$ is monotone in $\ll$. Similarly, $H_V$ is monotone in $\ll$.   
\end{proof}

\section{Conclusion}\label{Sec6}
So far we have considered the discrete dynamical system (\ref{e1}) with the map $T$ defined by (\ref{e2}). We have introduced the concepts of a type-K competitive system, a type-K retrotone (weakly type-K retrotone) map and a type-K convex set. We note that (weakly) type-K retrotone maps are backward monotone in the order $<_K$. We then have established that a general $N$-dimensional type-K competitive system under certain conditions means the map $T: [\mathbf{0}, r]\to T([\mathbf{0}, r])$ is a homeomorphism and (weakly) type-K retrotone. Under these conditions and the assumptions that (\ref{e1}) with (\ref{e2}) is type-K competitive, that the origin $\mathbf{0}$ is a repeller, that every orbit $T^n(x)$ enters into the forward invariant set $[\mathbf{0}, r]$, we have shown the existence of the global attractor $\Sigma$ of the system on $C\setminus\{\mathbf{0}\}$ and its decomposition $\Sigma = \Sigma_H\cup\Sigma_0\cup\Sigma_V$ with $\Sigma_0= \Sigma\setminus(\Sigma_h\cup\Sigma_V)$ and (\ref{e10}). 

We have further proved that $\Sigma_H\cap C_H$ is the global attractor of the system restricted to $C_H\setminus\{\mathbf{0}\}$, the (modified) carrying simplex of a $k$-dimensional competitive subsystem, that $\Sigma_V\cap C_V$ is the global attractor of the system restricted to $C_V\setminus\{\mathbf{0}\}$, the (modified) carrying simplex of an $(N-k)$-dimensional competitive subsystem, that both $\Sigma_H\cap C_H$ and $\Sigma_V\cap C_V$ repel on $\Sigma$, and that $\Sigma_0$ is the global attractor of the system restricted to $C\setminus(C_H\cup C_V)$. This supports the following simplified view: viewing each of $C_H$ and $C_V$ as ``one-dimensional'' positive half axis and $C$ as the first quadrant in the ``two-dimensional'' plane, $\Sigma_H\cap C_H$ can be viewed as a saddle point on $C_H$ with $C_H$ as its stable manifold and $\Sigma_H$ as its unstable manifold, $\Sigma_V\cap C_V$ can be viewed as a saddle point on $C_V$ with $C_V$ as its stable manifold and $\Sigma_V$ as its unstable manifold, and $\Sigma_0$ can be viewed as a global attractor inside the first quadrant.

We have also proved that $\Sigma_H\cup\Sigma_0$ and $\Sigma_V\cup\Sigma_0$ are unordered in $\ll_K$. This means that each of $\Sigma_H\cup\Sigma_0$ and $\Sigma_V\cup\Sigma_0$ is one-one correspondent to a bounded subset of an $(N-1)$-dimensional plane. These enable us to point out the remaining problems to be investigated (see the Open Problems below).

For planar type-K competitive systems, we have established a clear picture about geometric and dynamic features on $\Sigma$: both $\Sigma_H\cup\Sigma_0$ and $\Sigma_V\cup\Sigma_0$ are one-dimensional monotone curves in $<$ and compact invariant manifolds. The dynamics of such planar systems is simple: every forward orbit is eventually monotone either in $<$ or $\ll_K$, thus every forward orbit converges to a fixed point.

We now end this paper with the following:
\vskip 3 mm
\noindent \textbf{Open Problems} Based on the results of Propositions \ref{pro2.4} and \ref{pro2.5}, suppose $N\geq 3$ for (\ref{e1}) with (\ref{e2}).
\begin{itemize}
	\item[1)] Find the geometric features of $\Sigma$. Is $\Sigma_H$, $\Sigma_V$ or $\Sigma$ an $(N-1)$-dimensional surface?
	\item[2)] Find the dynamical feature of the system on $\Sigma$. Can the dynamics of the system on $C\setminus\{\mathbf{0}\}$ be reprsented by the dynamics restricted to $\Sigma$? In other words, is every nontrivial forward orbit asymptotic to one in $\Sigma$?
	\item[3)] Note that the conditions $(a)$-$(c)$ of Proposition \ref{pro2.3} ensure that $T: [\mathbf{0}, r]\to T([\mathbf{0}, r])$ is a homeomorphism and weakly type-K retrotone. Condition (b) is difficult to check for any concrete system. Tempted by the planar case, can $(a)$ and (c) imply (b)? Or is there an easily checkable sufficient condition for (b) to hold? 
	\item[4)] Tempted by the planar case, for $N=3$, can we find a detailed configuration or classification of $\Sigma$ for complete understanding of 3D type-K competitive systems?
\end{itemize}


\medskip
\medskip


\begin{thebibliography}{99}
	
	
	
	
	
	
	
	
	\bibitem{Her} A. Ruiz-Herrera, \emph{Exclusion and dominance in discrete population models via the carrying simplex}, J. Differ. Equ. Appl. \textbf{19} (1) (2013), 96--113.
	
	\bibitem{Hir1} M. W. Hirsch, \emph{Systems of differential equations that are competitive or cooperative. III: Competing species}, Nonlinearity \textbf{1} (1988), 51-71.
	
	
	\bibitem{HirSmi} M. W. Hirsch and Hal Smith, Chapter 4 Monotone Dynamical Systems, Handbook of Differential Equations: Ordinary Differential Equations, Volume 2, 2006, 239-357.
	
	
	\bibitem{Hou1} Z. Hou, \emph{On existence and uniqueness of a carrying simplex in Kolmogorov differential systems}, Nonlinearity \textbf{33} (2020) 7067--7087.
	
	\bibitem{Hou2} Z. Hou, \emph{On existence and uniqueness of a modified carrying simplex for discrete Kolmogorov systems}, Journal of Difference Equations and Applications, \textbf{27(2)} (2021), 284--315.
	
	\bibitem{Hou3} Z. Hou, \emph{On global dynamics of type-K competitive Kolmogorov differential systems}, Nonlinearity \textbf{36} (2023), 3796--3834.
	
	\bibitem{JiMiWa} J. Jiang, J. Mierczy\'{n}ski and Y. Wang, \emph{Smoothness of the carrying simplex for discrete-time competitive dynamical systems: A characterization of neat embedding}, J. Differ. Equ. \textbf{246} (4) (2009), 1623--1672.
	
	
	
	
	\bibitem{Kul} M. R. S. Kulenovi\'{c}, J. Marcotte and O. Merino, \emph{Properties of basins of attraction for planar discrete cooperative maps}, Discrete and Continuous Dynamical Systems B \textbf{26 (5)} (2021), 2721--2737.
	
	\bibitem{LiaJia} X. Liang and J. Jiang, \emph{The dynamical behaviour of type-K competitive Kolmogorov systems and its applications to three-dimensional type-K competitive Lotka-Volterra systems}, Nonlinearity \textbf{16} (2003), 785--801.
	
	\bibitem{MieBai} Janusz Mierczy\'{n}ski a and Stephen Baigent, \emph{Existence of the carrying simplex for a retrotone map}, Journal of Difference Equations and Applications, \textbf{30 (3)} 
	(2024), 287--319.
	
	\bibitem{Lan} Peter Lancaster, \emph{Theory of Matrices}, Academic Press, New York and London, 1969.
	
	\bibitem{Smi} H. L. Smith, \emph{Planar competitive and cooperative difference equations}, Journal of Difference Equations and Applications \textbf{3} (1998), 335--357.
	
	\bibitem{Smi1} Hal L. Smith, Monotone Dynamical Systems: An Introduction to the Theory of Competitive and Cooperative Systems (Mathematical Surveys and Monographs), American Mathematical Society, 2008. 
	
	\bibitem{WaJi1} Y. Wang and J. Jiang, \emph{The general properties of discrete-time competitive dynamical systems}, J. Differ. Equ. \textbf{176} (2001), 470--493.
	
	\bibitem{WaJi2} Y. Wang and J. Jiang, \emph{Uniqueness and attractivity of the carrying simplex for discrete-time competitive dynamical systems}, J. Differ. Equ. \textbf{186} (2002), 611--632.
	
	
\end{thebibliography}
\end{document}